\documentclass[11pt,review]{my-elsarticle}

\usepackage{lineno,hyperref}
\usepackage{amssymb}
\usepackage{moreverb,multirow}
\usepackage{amssymb}
\usepackage{amsmath,amsthm}
\usepackage{amsfonts,dsfont,mathtools}
\usepackage[margin=2cm]{geometry}
\usepackage[hypcap]{caption}
\usepackage{caption}
\usepackage{subcaption}
\usepackage{multicol}
\usepackage{morefloats}
\usepackage{tikz}
\usetikzlibrary{shapes,arrows,positioning}
\usepackage{booktabs,tabularx}
\usepackage{multirow}
\usepackage{url}
\usepackage{graphics, graphicx, epstopdf}
\usepackage{algorithm}

\captionsetup[subfigure]{labelformat=simple}

\renewcommand{\&}{and}

\usepackage{float}

\theoremstyle{plain}
\newtheorem{theorem}{Theorem}[section]

\theoremstyle{definition}

\theoremstyle{remark}

\theoremstyle{remark}

\theoremstyle{plain}
\newtheorem{lemma}{Lemma}[section]

\newtheorem{definition}{Definition}[section]

\journal{This preprint is published in 'Modeling Earth Systems and Environment' 
[\url{https://doi.org/10.1007/s40808-024-02046-8}].}


\begin{document}
	
\begin{frontmatter}
		
\title{Modelling the dynamics of online food delivery services\\ 
on the spread of food-borne diseases}

\author[address11]{Emmanuel Addai}
\ead{eaddai1@ualr.edu; papayawewit@gmail.com}
\address[address11]{College  of Computer and Information Science, 
University of Arkansas at Little Rock,
72204 Little Rock, Arkansas, USA}

\author[address22,address222]{Delfim F. M. Torres}
\ead{delfim@ua.pt; delfim@unicv.cv}
\address[address22]{Center for Research and Development
in Mathematics and Applications (CIDMA),\\ 
Department of Mathematics, University of Aveiro, 3810-193 Aveiro, Portugal}
\address[address222]{Research Center in Exact Sciences (CICE),
Faculty of Sciences and Technology (FCT),\\
University of Cape Verde (Uni-CV),
7943-010 Praia, Cabo Verde}

\author[mysecondaryaddress2]{Zalia Abdul-Hamid}
\address[mysecondaryaddress2]{Department of Logistics and Sustainable Transport, 
University of Maryland, 20740 College Park, Maryland, USA}
\ead{zaliaabdulhamid96@gmail.com}

\author[address11]{Mary Nwaife Mezue}
\ead{Mezuemary@gmail.com}

\author[address3,address1]{Joshua~Kiddy~K.~Asamoah\corref{mycorrespondingauthor}}
\ead{jkkasamoah@knust.edu.gh}
\address[address3]{Department of Mathematics, Saveetha School of Engineering SIMATS, Chennai, India}
\address[address1]{Department of Mathematics, Kwame Nkrumah University of Science and Technology, Kumasi, Ghana}
\cortext[mycorrespondingauthor]{Corresponding Author}

\begin{abstract}
We propose and analyze a deterministic mathematical model
for the transmission of food-borne diseases in a
population consisting of humans and flies.
We employ the Caputo operator to examine the impact of governmental
actions and online food delivery services on the transmission
of food-borne diseases. The proposed model investigates important aspects
such as positivity, boundedness, disease-free equilibrium,
basic reproduction number and sensitivity analysis. The existence and uniqueness of a solution
for the initial value problem is established using Banach and
Schauder type fixed point theorems. Functional techniques are employed
to demonstrate the stability of the proposed model under the Hyers-Ulam condition.
For an approximate solution, the iterative fractional order Predictor-Corrector
scheme is utilized. The simulation of this scheme is conducted using Matlab
as the numeric computing environment, with various fractional order values
ranging from 0.75 to 1. Over time, all compartments demonstrate convergence
and stability. The numerical simulations highlight the necessity for the
government to implement the most effective food safety control interventions.
These measures could involve food safety awareness and training campaigns
targeting restaurant managers, staff members involved in online food delivery,
as well as food delivery personnel.	
\end{abstract}

\begin{keyword}
Mathematical modelling
\sep Food-borne diseases transmission
\sep Online food delivery
\sep Caputo fractional derivatives
\sep Numerical simulations.

\MSC[2020]{26A33; 34D20; 37M05; 92B05.}
\end{keyword}

\end{frontmatter}	


\section{Introduction}

Urbanisation, non-communicable diseases, unhealthy diets, and climate change
are all acknowledged as major threats to world health \citep{MS1}. In order to
promote healthy lives and well-being and create inclusive, secure, and
sustainable cities by 2030, the United Nations has asked all of its member
states to take action on 17 Sustainable Development Goals (SDGs) spanning
economic, social, and environmental dimensions \citep{MS2}. Online food delivery
services (OFDS) have the potential to impede our progress towards the SDGs by
affecting our eating, working, and environmental practises. OFDS provide
delivery of a wide range of takeaway foods and beverages from kitchens to
doorsteps and are described as platform-to-consumer delivery operations 
of ready-to-consume meals \citep{MS3}. These include of dishes such as pizza, 
burgers, sandwiches, wraps, spaghetti, and pides. Currently, the Online Food 
Delivery Service (OFDS) industry is largely controlled by major international 
corporations with a global footprint. The practice of ordering ready-to-eat meals 
online introduces new and additional risks to food safety, compounded by the growing 
reach and prevalence of OFDS. These risks stem from the involvement of delivery personnel 
in the supply chain, acting as intermediaries between food producers and the end consumers. 
For example, when the consumer is not present at the point of sale, such as a restaurant, 
it opens opportunities for fraudulent activities like swapping or mislabeling products, 
aiming to profit by substituting high-quality 
items with substandard alternatives \citep{MS4}.

Over the past few decades, concerted efforts by food suppliers, industries, 
and regulatory bodies have been instrumental in elevating food production standards 
to meet rigorous health requirements. This has been pivotal in curbing the public 
health risks and financial repercussions associated with foodborne illnesses. Numerous 
initiatives and programs have been put in place globally by both governmental entities 
and the private sector to realize this objective. Despite these advances, foodborne diseases 
triggered by various agents like bacteria (E. coli, Salmonella, Campylobacter, Listeria, 
Clostridium perfringens), viruses (Norovirus), natural toxins, and chemical contaminants-continue 
to pose significant health threats internationally. This is due to the continuous 
emergence of new hazards while existing ones are being managed.

Recently, the study of infectious disease dynamics has expanded 
significantly, developing into a complex, interdisciplinary domain 
that includes epidemiology, public health, and beyond \cite{NY1}, \cite{NY2}. 
This area brings together a wide range of academic disciplines such as sociology, 
machine learning, artificial neural networks, mathematics, and biology to understand 
and predict the spread of diseases and various social behavioral issues 
\cite{NY3}--\cite{NY5}. Mathematical models are pivotal in epidemiology, 
which focuses on how diseases spread. These models apply mathematical principles 
to depict the dissemination of diseases and their effects on populations. 
They are essential tools for research, enabling scientists to formulate 
and test hypotheses, assess quantitative theories, address specific questions, 
evaluate the effects of parameter changes, and calculate vital parameters 
using mathematical modeling and computer simulations \citep{MS5,MS6,MS7}.
For many years, mathematical models have helped researchers and public health
practitioners to predict the potential impact of disease outbreaks, identify
the key drivers of outbreaks, and evaluate the effectiveness of different
control strategies. Although much research based on mathematical modelling has
been done contributing to fight food-borne diseases, the most worrying aspect about it
is that we are yet far-reaching from the accurate and high degree of prediction
\citep{MS8,MS9,MS10}. As a result, it is critical to improve our
understanding of the dynamics of food-borne transmission by incorporating novel
methods into the current models. Insights into the spread and management of
numerous viral infectious diseases around the world have long been obtained
using mathematical modelling \citep{MS11,MS12,MS13}.
The study conducted by \cite{mugabi2024behaviours} employed a mathematical model 
to investigate how the behaviours of honeybees can decrease the likelihood 
of outbreaks of deformed wing virus in colonies afflicted with Varroa destructor. 
The paper by \cite{abidemi2023optimal} investigated the optimal and cost-effective 
regulation of drug addiction among students using mathematical modelling. The paper by 
\cite{alade2023mathematical} introduced a mathematical model that describes 
the dynamics of Chikungunya virus within a host, taking into account the 
adaptive immune response. The SDIQR mathematical model of COVID-19 is utilised 
by LADT to examine numerical data regarding infected migrants in Odisha. 
The COVID-19 model in the work of \cite{sahu2023sdiqr} utilises analytical power 
series and LADT to estimate the solution profiles of dynamical variables. 
The research undertaken by \cite{peter2022transmission} centres on the creation 
and examination of a deterministic mathematical model for the monkeypox virus. 
The criteria for achieving both local and global stability of disease-free 
and endemic equilibria are established. The evidence suggests that the model 
undergoes a backward bifurcation, where the disease-free equilibrium, which 
is stable in the local context, coexists with an endemic equilibrium. 
The research undertaken by \cite{alla2024mathematical} presented a mathematical 
model that combines the memory effect to mimic the transmission of HIV in 
a heterosexual population. The population is divided into two age cohorts: 
the youth cohort, comprising those aged 15--24, and the adult cohort, 
comprising individuals aged 25 and above. The purpose of classifying 
the population by age is to identify the most vulnerable group to the virus 
based on their sexual behaviour and produce accurate predictions on HIV transmission. 
A study conducted by \cite{ahmad2024mathematical} devised an optimal control 
system that integrated three strategies (public awareness campaign, 
post-exposure vaccination, and isolation) to determine the most efficient 
combination for substantially lowering or eradicating disease transmission. 
We have proven the existence of optimum control and identified the necessary 
conditions for optimality through the application of Pontryagin's notion.
Mathematical models based on traditional integer-order dynamics have been 
extensively applied in epidemiology due to their success in describing disease 
spread, see \citep{MS14} and the references therein. Nevertheless, it is becoming 
increasingly recognized that these models may not sufficiently account for complex 
phenomena such as hereditary characteristics, long-distance interactions, 
and memory effects, which are prevalent across various scientific and engineering 
disciplines, see \citep{MS15,MS16,MS17,MS18,MS19} and the references therein.

In contrast, models employing fractional-order differential operators are gaining favor 
for their enhanced ability to depict the intricate behaviors of dynamic systems. 
Fractional models are particularly adept at incorporating memory and hereditary attributes, 
offering a more nuanced understanding of disease dynamics. This attribute makes them 
exceptionally relevant for a more accurate simulation of biological processes 
and other fields where these complex dynamics are significant \citep{MS20}.

Fractional order differential problems are now one of the most effective
and practical methods for modelling nonlinear processes that appear in
innumerable applications in real-world settings. Recently, the discipline of
mathematical biology has begun to apply several types of fractional order
derivatives. These operators have recently appear in numerous works,
for instance, in mathematical models for HIV/AIDS \citep{MS36},
Ebola-malaria co-infection models \citep{MS37}, smoke age-specific 
models \citep{MS38}, monkeypox transmission dynamics \citep{MS39},
Middle East Lungs Coronavirus dynamism transmission models \citep{MS40},
COVID-19 models \citep{MS41}, and Hepatitis E disease models \citep{MS42}.
For more applications of fractional differential equations see, e.g.,
\citep{MS43,MS44} and references therein. Furthermore, in the context of food-borne diseases
transmission dynamics, the authors in \citep{MS45} used fractional $q$-Homotopy
analysis transformations to study food-borne disease model and suggested control
policies that help the general public comprehension of the importance of
control parameters in the extinction of the diseases. In fact, this study has
accurately predicted the food-borne diseases transmission dynamics and the
impact of government intervention and online food delivery services on the
outbreak, allowing for a more accurate representation of the underlying disease
dynamics. However, none of them, to the best of our knowledge, have studied the effects of
government intervention and online food delivery services on the dynamics of
food-borne diseases transmission using fractional-order derivatives.

The primary goal of this study is to develop and analyze a fractional-order epidemic model 
using the Caputo definition, to assess the impact of governmental measures and online food 
delivery services on controlling the spread of food-borne diseases. The Caputo fractional-order 
derivative is chosen for its ability to incorporate conventional initial conditions into the model's 
solution, making parameter estimation more precise due to its inherent memory capability.
The structure of the paper is methodically laid out to facilitate understanding and exploration of the model. 
Definitions and necessary preliminary information form the basis of Sect.\ref{sec:2}, setting 
the stage for the subsequent analysis. Sect.~\ref{sec:3} is dedicated to the method of the study, 
the detailed construction of the model pertaining to the transmission of food-borne diseases. 
Further, explores the equilibrium states and the basic reproduction number -- key concepts 
in understanding the potential spread of an infection. The mathematical rigor continues, 
which verifies the unique solutions to the proposed model. We discuss the stability of these 
solutions, an essential aspect of predicting the long-term behavior of the disease transmission. 
The paper then moves into more applied aspects in Sect.~\ref{sec:7}, where the numerical 
techniques used in the study are outlined. In Sect.~\ref{sec:8} we present the results 
of the study by simulations that demonstrate the practical implications of the model. 
In Sect.~\ref{sec:9} we present the discussion of the study. Finally, the paper wraps 
up with conclusions in Sect.~\ref{sec:10}, which summarize the findings and their 
relevance to public health policy and disease control measures.


\section{Preliminaries}
\label{sec:2}

We recall the necessary definitions
of fractional derivative and fractional integral
and a lemma that is important in our proofs.

\begin{definition}[See \citep{MS46}]
\label{Definition2.1}
Let $\mathcal{M}:\mathbb{R}^{+}\rightarrow \mathbb{R}$ and $\alpha \in (n - 1, n)$,
$n \in \mathbb{N}.$ Then, one can define the Caputo derivative with order $\alpha$
for the function $\mathcal{M}$ by
\begin{equation}
^{C}_{0}D^{\alpha}_{t}\mathcal{M}(t)=\frac{1}{\Gamma(n-\alpha)}\int^{t}_{0}(t-s)
^{n-\alpha-1}\mathcal{M}^{(n)}(s){\rm d}s.
\end{equation}
\end{definition}

\begin{definition}[See \citep{MS46}]
\label{Definition2.2}
The related fractional integral is given by
\begin{equation}
_{0}^{C}I^{\alpha}_{t}\mathcal{M}(t)=\frac{1}{\Gamma(\alpha)}\int^{t}_{0}(t-s)^{
\alpha-1}\mathcal{M}(s){\rm d}s.
\end{equation}
\end{definition}

\begin{lemma}[See \citep{MS47,MS48}]
\label{Lemma2.3}
Consider a function $\mathcal{M}\in C[0,\mathcal{T}]$.
The solution of the fractional differential equation
\begin{equation}
\left\{
\begin{array}{llllllllll}
_{0}^{C}D^{\alpha}_{t}\mathcal{M}(t)=\Phi(t),
\quad t\in[0,\eta], \cr
\mathcal{M}(0) = \mathcal{M}_{0},
\end{array}
\right.
\end{equation}
is given by
\begin{equation}
\mathcal{M}\left( t\right) =\mathcal{M}\left( 0\right) + \frac{1}{\Gamma({\alpha})}
\int ^t_0 \Phi (s,\mathcal{M}(s))(t-s)^{\alpha-1} ds.
\end{equation}
\end{lemma}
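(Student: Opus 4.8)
The plan is to establish the representation by applying the fractional integral operator $_{0}^{C}I^{\alpha}_{t}$ of Definition~\ref{Definition2.2} to both sides of the governing equation and exploiting the fundamental composition relation between the Caputo derivative and the fractional integral. Because the model of interest uses $\alpha \in (0,1)$, I would carry out the argument in the regime $n=1$, where it is cleanest, and then indicate the extension to general $n-1<\alpha<n$.

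First I would observe that the Caputo derivative in Definition~\ref{Definition2.1} is nothing but the fractional integral of order $n-\alpha$ applied to the classical $n$-th derivative $\mathcal{M}^{(n)}$, that is, $^{C}_{0}D^{\alpha}_{t}\mathcal{M}(t) = {}_{0}^{C}I^{\,n-\alpha}_{t}\mathcal{M}^{(n)}(t)$. Applying $_{0}^{C}I^{\alpha}_{t}$ to the identity $^{C}_{0}D^{\alpha}_{t}\mathcal{M}(t)=\Phi(t)$ then produces, on the left, the composite operator $_{0}^{C}I^{\alpha}_{t}\,{}_{0}^{C}I^{\,n-\alpha}_{t}\mathcal{M}^{(n)}(t)$.

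The central step, and the one I expect to demand the most care, is the semigroup (additivity) property of fractional integration, $_{0}^{C}I^{\alpha}_{t}\,{}_{0}^{C}I^{\beta}_{t} = {}_{0}^{C}I^{\,\alpha+\beta}_{t}$. To prove it I would write out both nested integrals, interchange the order of integration via Fubini's theorem (legitimate since $\mathcal{M}\in C[0,\mathcal{T}]$ keeps every integrand integrable), and reduce the inner integral to the Dirichlet-type integral $\int_{s}^{t}(t-\tau)^{\alpha-1}(\tau-s)^{\beta-1}\,d\tau = (t-s)^{\alpha+\beta-1}B(\alpha,\beta)$. Combining this with the Beta--Gamma identity $B(\alpha,\beta)=\Gamma(\alpha)\Gamma(\beta)/\Gamma(\alpha+\beta)$ collapses the two operators into a single fractional integral of order $\alpha+\beta$. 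Taking $\beta=n-\alpha$ turns the left-hand side into the ordinary $n$-fold integral $_{0}^{C}I^{\,n}_{t}\mathcal{M}^{(n)}(t)$.

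Finally I would apply the classical fundamental theorem of calculus $n$ times to obtain $_{0}^{C}I^{\,n}_{t}\mathcal{M}^{(n)}(t) = \mathcal{M}(t) - \sum_{k=0}^{n-1}\frac{\mathcal{M}^{(k)}(0)}{k!}t^{k}$, which for $n=1$ reduces to $\mathcal{M}(t)-\mathcal{M}(0)$. Writing the remaining integral explicitly through Definition~\ref{Definition2.2} and substituting $\Phi(s,\mathcal{M}(s))$ in place of $\Phi(t)$, consistently with the initial value problem, then yields the asserted formula. The converse direction, that this integral representation indeed solves the problem, follows by differentiating and invoking the same composition identity in the reverse order, so the two formulations are equivalent.
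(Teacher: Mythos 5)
Your argument is correct, but there is no internal proof to compare it against: the paper states Lemma~\ref{Lemma2.3} as a known result imported from \citep{MS47,MS48} and offers no proof of its own. The route you take --- apply the fractional integral $_{0}^{C}I^{\alpha}_{t}$ of Definition~\ref{Definition2.2} to both sides, identify the Caputo derivative of Definition~\ref{Definition2.1} as $_{0}^{C}I^{\,n-\alpha}_{t}\mathcal{M}^{(n)}$, collapse $_{0}^{C}I^{\alpha}_{t}\,{}_{0}^{C}I^{\,n-\alpha}_{t}$ into $_{0}^{C}I^{\,n}_{t}$ via Fubini and the Beta--Gamma identity, and finish with the fundamental theorem of calculus, which for $n=1$ leaves $\mathcal{M}(t)-\mathcal{M}(0)$ on the left --- is precisely the standard derivation used in the cited sources and in the classical texts they rely on. In effect you have supplied the proof the paper omits, rather than an alternative to it.

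Two points deserve sharpening. First, the hypothesis $\mathcal{M}\in C[0,\mathcal{T}]$ is too weak to support your opening step: writing $^{C}_{0}D^{\alpha}_{t}\mathcal{M}={}_{0}^{C}I^{\,n-\alpha}_{t}\mathcal{M}^{(n)}$ presumes that $\mathcal{M}^{(n)}$ exists and is integrable (for $n=1$, absolute continuity of $\mathcal{M}$ suffices). This defect is inherited from the lemma as stated, but your proof should declare the regularity it actually uses. Second, your closing sentence, that the converse direction ``follows by differentiating,'' is where the real work sits once $\Phi$ depends on $\mathcal{M}$: one must check that applying $^{C}_{0}D^{\alpha}_{t}$ to the integral term returns $\Phi(t,\mathcal{M}(t))$, which rests on the identity $^{C}_{0}D^{\alpha}_{t}\,{}_{0}^{C}I^{\alpha}_{t}\Phi=\Phi$ for continuous $\Phi$; invoke that identity explicitly rather than gesturing at it, since it is exactly the statement that makes the integral equation equivalent to, and not merely a consequence of, the initial value problem.
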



\section{Method}
\label{sec:3}

\subsection{Model Formulation}

In this section, we present the formulation of our model.
For this, we consider both human and flies populations
in a closed homogeneous environment. For a human population
at a time $t$, we consider five compartments:
Susceptible humans $S(t)$; Asymptomatic infected humans $A(t)$;
Symptomatic infected humans $I(t)$; Online food delivery personals $D(t)$.
For the human population, we define the quantity $N_{h}$ (total human population) by
\begin{equation}
\label{eq:Nh}
N_{h}(t) = S(t)+ A(t)+ I(t)+ D(t).
\end{equation}
Now we describe the three compartments of flies:
pupae of flies $P_{f}(t)$, adult flies population $G_{f}(t)$,
and parasitic wasps population $W_{p}(t)$. The flies population,
as a whole, is given by
\begin{equation}
\label{eq:Nf}
N_{f}(t) =P_{f}(t)+ G_{f}(t)+ W_{p}(t).
\end{equation}
We assume that individuals have no permanent recovery. Asymptomatic individuals
who develop temporal resistance to the disease become susceptible again at the
rate $\xi$. The value $\xi=0$ indicates no temporal resistance to the
disease, while  $\xi=1$ indicates 100\% resistance to the disease, which means
that no infection occurs $(I(t)\equiv 0)$. We model local government interventions
by a control $u(t)$, which depends on time $t$ and take
values between zero and one. We assume that there is no local government
intervention when $u(t)=0$, while  $u(t)=1$ represents perfect or maximum
intervention. Considering the interrelationship, we formulate
the following deterministic system of nonlinear differential equations:
\begin{equation}
\label{EQ1}
\begin{cases}
\frac{dS(t)}{dt}=\Pi +\xi A-(1-u)\lambda S-\mu_{h}S, \cr
\frac{dA(t)}{dt}=(1-u)\lambda S-(\mu_{h}+\eta+\xi)A,\cr
\frac{dI(t)}{dt}=\eta A-(\mu_{h}+\delta)I,    \cr
\frac{dD(t)}{dt}=\psi(A+I)-((1-u)+\theta+\gamma)D,  \cr
\frac{dP_{f}(t)}{dt}=\sigma G_{f}-(\rho+\mu_{f})P_{f}-\tau P_{f}W_{p}, \cr
\frac{dG_{f}(t)}{dt}=\rho P_{f}-\mu_{f}G_{f},\cr
\frac{dW_{p}(t)}{dt}=\kappa\tau P_{f}W_{p}-\mu_{f}W_{p},
\end{cases}
\end{equation}
subject to given initial conditions
\begin{gather}
S(0)=S_{0}\geq0, \quad A(0)=A_{0}\geq0,\quad  I(0)=I_{0}\geq0, \quad D(0)=D_{0}\geq0,\\
P_{f}(0)=P_{f_{0}}\geq0, \quad G_{f}(0)=G_{f_{0}}\geq0, \quad W_{p}(0)=W_{p_{0}}\geq0,
\end{gather}
where $\lambda=\frac{\beta(r A+I)D}{N_h}+\vartheta G_{f}$ and $t>0$.
The flow diagram of the model is presented in Fig.~\ref{Fig3},
while the description of the parameters
is presented in Table~\ref{T3}. Table~\ref{T3} 
also contains values that is for the numerical simulations.
\begin{figure}[!ht]
\centering{\includegraphics[width=0.8\textwidth]{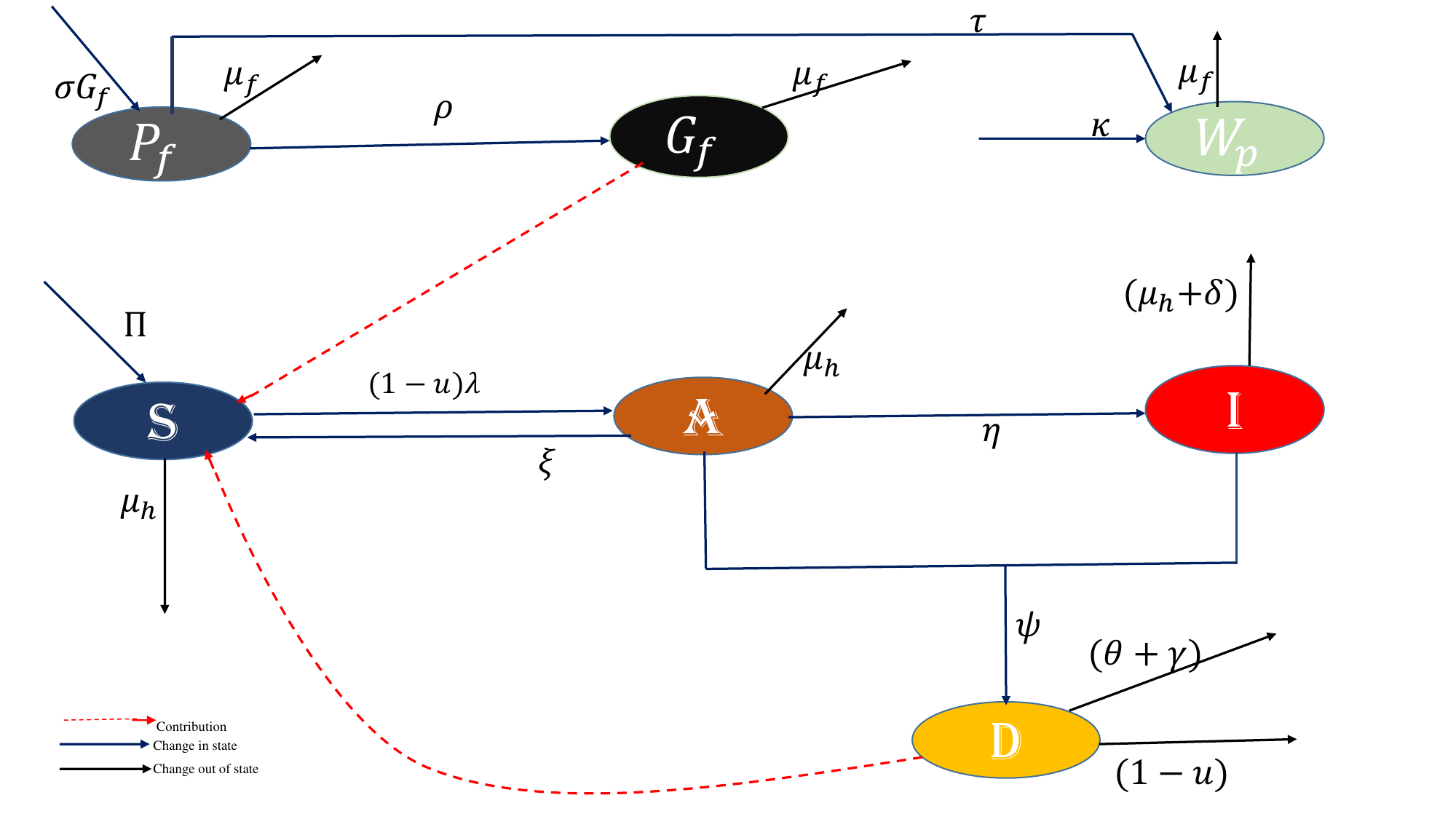}} \\
\caption{Transfer diagram for the dynamic transmission
of food-borne diseases given by model \eqref{EQ1}.}\label{Fig3}
\end{figure}
\begin{table}[!ht]
\centering\caption{\small Interpretation of the parameters
of model \eqref{EQ1}.}
\label{T3}
\footnotesize
\begin{tabular}{lllll}\hline
Parameter & value &Interpretation & value\\
\hline
$\Pi$&$1000$    & recruitment rate of human& Assumed \\
$\xi$ &$0.0021$ (day$^{-1})$ & temporal resistance rate & Assumed\\
$\beta$ &$0.0014$ (day$^{-1})$& effective contact rate& Assumed \\
$\theta$& 0.50 (day$^{-1}$)& the rate of environmental hygiene & \citep{MS45}\\
$\gamma$& 0.001 (day$^{-1}$)& the rate of $A(t)$ fail to deliver food ordered& \citep{MS45}\\
$r$ &$0.0016667$ (day$^{-1})$& progression rate from $A(t)$ to $I(t)$ & Assumed\\
$\mu_{h}, \mu_{f}$&1/87.7,\,0.000233 (day$^{-1})$& natural death rate for human
and flies & \citep{MS45} \\
$\eta$&0.000375 (day$^{-1})$& the rate at which the asymptomatic become infected & Assumed\\
$\delta$& 0.01 (day$^{-1}$) & disease induced death rate& Assumed\\
$\psi$& 0.47 (day$^{-1}$)& the rate of disease inflow by both $A(t)$ and
$I(t)$ & Assumed\\
$\sigma$& 0.019 (day$^{-1}$)& the rate of adult fly lay egg in the environment & \citep{MS45}\\
$\rho$& 0.003 (day$^{-1}$)& the rate at which $P_{f}(t)$ move to $G_{f}(t)$ & \citep{MS45}\\
$\tau$& 0.0021 (day$^{-1}$)& reduction coefficient of $P_{f}(t)$ and $W_{p}(t)$
due to the interaction & Assumed\\
$\kappa$& 0.01 (day$^{-1}$)& proportionality constant & \citep{MS45}\\
\hline\\
\end{tabular}
\end{table}

According to the explanations of the time-dependent kernel defined by the power law
correlation function, presented in \citep{MS49}, one can take into consideration
the Caputo fractional order derivative on our proposed model:

\clearpage

\begin{equation}
\label{EQ2}
\left\{
\begin{array}{llllllllll}
{}^{C}_{0}D^{\alpha}_{t}S(t)=\Pi +\xi A-(1-u)\lambda S-\mu_{h}S, \cr
{}^{C}_{0}D^{\alpha}_{t}A(t)=(1-u)\lambda S-(\mu_{h}+\eta+\xi)A,\cr
{}^{C}_{0}D^{\alpha}_{t}I(t)=\eta A-(\mu_{h}+\delta)I,    \cr
{}^{C}_{0}D^{\alpha}_{t}D(t)=\psi(A+I)-((1-u)+\theta+\gamma)D,  \cr
{}^{C}_{0}D^{\alpha}_{t}P_{f}(t)=\sigma G_{f}-(\rho+\mu_{f})P_{f}-\tau
P_{f}W_{p}, \cr
{}^{C}_{0}D^{\alpha}_{t}G_{f}(t)=\rho P_{f}-\mu_{f}G_{f},\cr
{}^{C}_{0}D^{\alpha}_{t}W_{p}(t)=\kappa\tau P_{f}W_{p}-\mu_{f}W_{p}, \cr
\end{array}
\right.
\end{equation}
where $t\in[0,\mathcal{T}]$, $\mathcal{T}\in \mathbb{R}$,
and ${}^{C}_{0}D^{\alpha}_{t}$ denotes the Caputo fractional
derivative of order $0<\alpha \leq1$. Note that when
$\alpha = 1$ the fractional-order model \eqref{EQ2}
reduces to \eqref{EQ1}.


\subsection{Fundamental Qualitative Properties of the Model}

Now, we prove positivity and boundedness
of the solution to the proposed model and we obtain
the corresponding basic reproduction number.


\subsection{Positivity and boundedness}

Here, we demonstrate the epidemiological significance of the model under
consideration. We prove that the system classes for model \eqref{EQ1}
are non-negative for all $t$, which means that our suggested model
has, for all $t> 0$, non-negative solutions for non-negative initial values.

\begin{lemma}
\label{Lemma1}
Let
\begin{equation}
\mathcal{M}(t)=\{(S(t), A(t), I(t), D(t), P_{f}(t), G_{f}(t), W_{p}(t))\in
\mathbb{R}^7\},
\quad t \geq 0,
\end{equation}
and $\mathcal{T}\in \mathbb{R}$. If the initial data is positive,
that is, $\mathcal{M}(0)>0$, then the solution classes of model
\eqref{EQ1} are non-negative in $[0,\mathcal{T}]$. Also,
\begin{equation}
\lim_{t\rightarrow \infty}N_{h}(t)\leq
\frac{\Pi}{\mu_{h}},
\end{equation}
and
\begin{equation}
\lim_{t\rightarrow \infty}N_{f}(t)\leq
\frac{\sigma}{\mu_{f}},
\end{equation}
with $N_{h}$ and $N_{f}$ given by
\eqref{eq:Nh} and \eqref{eq:Nf}, respectively.
\end{lemma}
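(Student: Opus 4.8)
The plan is to handle the two assertions in sequence, establishing positivity first since the boundedness estimates rely on the non-negativity of every compartment. For positivity I would invoke the standard invariance argument on the faces of the closed cone $\mathbb{R}^7_+$. First observe that on $\mathbb{R}^7_+$ the incidence $\lambda=\frac{\beta(rA+I)D}{N_h}+\vartheta G_f$ is non-negative, since every factor entering it is. I would then inspect the vector field of \eqref{EQ1} on each bounding hyperplane, that is, whenever a single compartment vanishes: if $S=0$ then $\frac{dS}{dt}=\Pi+\xi A\ge 0$; if $A=0$ then $\frac{dA}{dt}=(1-u)\lambda S\ge 0$; if $I=0$ then $\frac{dI}{dt}=\eta A\ge 0$; if $D=0$ then $\frac{dD}{dt}=\psi(A+I)\ge 0$; if $P_f=0$ then $\frac{dP_f}{dt}=\sigma G_f\ge 0$; if $G_f=0$ then $\frac{dG_f}{dt}=\rho P_f\ge 0$; and if $W_p=0$ then $\frac{dW_p}{dt}=0$. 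Since the field is inward-pointing or tangent on every face, no trajectory issuing from the interior can leave $\mathbb{R}^7_+$, so all classes remain non-negative on $[0,\mathcal{T}]$. Equivalently, each equation can be written as $\frac{dX}{dt}\ge -\phi_X(t)\,X$ with a coefficient $\phi_X$ that is continuous, hence locally integrable, along the solution, and multiplying by the integrating factor gives $X(t)\ge X(0)\exp\bigl(-\int_0^t\phi_X(s)\,ds\bigr)\ge 0$.

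For the human bound I would add the first four equations of \eqref{EQ1}. The incidence terms $\pm(1-u)\lambda S$ cancel, the resistance terms $\pm\xi A$ cancel, and the progression terms $\pm\eta A$ cancel; grouping and estimating the surviving linear transfer terms, which are controlled under positivity, produces the scalar differential inequality
\begin{equation}
\frac{dN_h}{dt}\le \Pi-\mu_h N_h.
\end{equation}
A comparison (Gronwall) argument then yields $N_h(t)\le \frac{\Pi}{\mu_h}+\bigl(N_h(0)-\frac{\Pi}{\mu_h}\bigr)e^{-\mu_h t}$, so that $\limsup_{t\to\infty}N_h(t)\le \frac{\Pi}{\mu_h}$. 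For the fly bound I would add the last three equations. The transition term $\rho P_f$ cancels, and the predator--prey interaction collapses to $(\kappa-1)\tau P_f W_p$, which is non-positive because $\kappa\le 1$; discarding this term leaves an inequality of the form
\begin{equation}
\frac{dN_f}{dt}\le \sigma-\mu_f N_f,
\end{equation}
and the identical comparison step gives $\limsup_{t\to\infty}N_f(t)\le \frac{\sigma}{\mu_f}$.

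The main obstacle is organizing the nonlinear couplings so that the summed seven-dimensional system is dominated by a single scalar linear ordinary differential equation. Positivity must be secured first, because only then can the leftover linear transfer terms in the $N_h$ balance be controlled and the right-hand side majorized by $\Pi-\mu_h N_h$; in the fly block the decisive move is signing the Lotka--Volterra term $\tau P_f W_p$, which enters the $P_f$ and $W_p$ equations with opposite signs and unequal weights, its net contribution being a sink precisely when the proportionality constant obeys $\kappa\le 1$. Once both scalar majorants are in place, the limiting bounds follow routinely from the elementary estimate for $\dot y\le a-by$.
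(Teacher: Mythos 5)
Your positivity argument is sound: checking that the vector field of \eqref{EQ1} points inward or is tangent on each face of $\mathbb{R}^7_+$ is the standard quasi-positivity test, and it is in fact cleaner than the paper's own integrating-factor computation (which miswrites the coefficient $\mathcal{Y}$ with a wrong sign and confuses $t$ with $t_1$). That half of your proposal stands.

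The boundedness half, however, contains a step that fails. Adding the first four equations of \eqref{EQ1} does cancel the incidence, resistance and progression terms, exactly as you say, but what survives is
\begin{equation*}
\frac{dN_h}{dt}=\Pi-\mu_h(S+A+I)-\delta I+\psi(A+I)-\bigl((1-u)+\theta+\gamma\bigr)D,
\end{equation*}
and the source term $\psi(A+I)$ is not compensated by any outflow from $A$ or $I$: the model feeds $D$ without removing anything from $A$ and $I$. Your claim that the surviving terms are ``controlled under positivity'' so that $\frac{dN_h}{dt}\le\Pi-\mu_h N_h$ cannot be completed. At any state with $D=0$ and $A+I>0$ the required domination $\psi(A+I)\le\delta I+\bigl((1-u)+\theta+\gamma-\mu_h\bigr)D$ is violated, and with the paper's parameter values ($\psi=0.47$, $\mu_h\approx 0.0114$, $\delta=0.01$) the best one can extract is $\frac{dN_h}{dt}\le\Pi+(\psi-\mu_h)N_h$, which allows exponential growth rather than the claimed bound. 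The fly block has the same defect in a different place: you correctly sign the interaction term, $(\kappa-1)\tau P_f W_p\le 0$ since $\kappa=0.01$, but pupae recruitment is $\sigma G_f$, not the constant $\sigma$, so the sum is $\frac{dN_f}{dt}=\sigma G_f-\mu_f N_f-(1-\kappa)\tau P_f W_p$, which is not majorized by $\sigma-\mu_f N_f$ unless $G_f\le 1$; since $\sigma>\mu_f$ in Table~\ref{T3}, no comparison with $\dot y\le\sigma-\mu_f y$ is available.

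For the record, the paper's own proof simply asserts that the summations yield $\frac{dN_h}{dt}=\Pi-\mu_h N_h$ and $\frac{dN_f}{dt}=\sigma-\mu_f N_f$ exactly; those identities are just as unfounded for system \eqref{EQ1} as written, for precisely the reasons above. So the obstruction you hedged around is not a matter of sharper estimation: it is a missing conservation structure in the model itself (an unmatched inflow $\psi(A+I)$ into $D$, and density-dependent fly recruitment $\sigma G_f$). Either the model equations must be amended (e.g., subtracting $\psi A$ and $\psi I$ from the $A$ and $I$ equations, and replacing $\sigma G_f$ by a bounded recruitment) or the stated bounds $\Pi/\mu_h$ and $\sigma/\mu_f$ must be weakened; no proof along your lines, or the paper's, closes the gap as the lemma is stated.
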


\begin{proof}
Take $t_{1}=\sup\{t>0:\mathcal{M}(t)>0\ \forall t \in[0,\mathcal{T}[\}$
and let us consider the first dynamical equation of model \eqref{EQ1}:
\begin{equation}
\frac{dS(t)}{dt}=\Pi +\xi A-(1-u)\lambda S-\mu_{h}S.
\end{equation}
For simplicity, let 
\begin{equation}
\mathcal{Y}=\left[(1-u)\lambda-\mu_{h}\right]
\end{equation}.
Thus,
\begin{equation}
\frac{dS(t)}{dt}+\mathcal{Y}S(t)=\Pi +\xi A.
\end{equation}
This leads to
\begin{equation}
\frac{d}{dt}\left(S(t)\exp\left(\int_{0}^{t_1}\mathcal{Y}(s)ds\right)\right)
=\left(\Pi+\xi A\right)\exp\left(\int_{0}^{t_1}\mathcal{Y}(s)ds\right),
\end{equation}
and
\begin{equation}
S(t)=\exp\left(-\int_{0}^{t_1}\mathcal{Y}(s)ds\right)\left[\int_{0}^{t_1}\left(\Pi
+\xi A\right)\exp\left(\int_{0}^{t_1}\mathcal{Y}(s)ds\right)\right].
\end{equation}
Hence, we proved that $S(0)> 0$ for all $t>0$. Similarly, from the other classes of
model \eqref{EQ1}, we can get $\mathcal{M}(0)>0$ for all $t>0$. Note that after the
summation of the state variables in model \eqref{EQ1}, we have
\begin{equation}
\begin{array}{llllllllll}
\frac{dN_{h}(t)}{dt}=\Pi-\mu_{h}N_{h}(t), \cr
\frac{dN_{f}(t)}{dt}=\sigma-\mu_{f}N_{f}(t).\cr
\end{array}
\end{equation}
Hence, $\lim_{t\rightarrow \infty}N_{h}(t)\leq \frac{\Pi}{\mu_{h}}$
and $\lim_{t\rightarrow \infty}N_{f}(t)\leq \frac{\sigma}{\mu_{f}}$,
which completes the proof.
\end{proof}

For feasibility of model \eqref{EQ1}, we let  $\mathcal{X}
=\Omega_{h}+\Omega_{F}\subset \mathbb{R}^{1}_{+}
\times \mathbb{R}^{2}_{+}$, where
\begin{equation}
\mathcal{X}=\left\{(S,A,I,D,P_f,G_f,W_p)\in \mathbb{R}^{7}_{+}:
N_h(t)\leq\frac{\Pi}{\mu_{h}},~N_{f}(t)\leq\frac{\sigma}{\mu_{f}}\right\}.
\end{equation}
In solving the inequalities above we have
\begin{equation}
\begin{array}{llllllllll}
N_h(t)\leq N_h(0)e^{-\mu_{h}(t)}+\frac{\Pi}{\mu_{h}}\left(1-e^{-\mu_{h}(t)}\right), \cr
N_{f}(t)\leq
N_{f}(0)e^{-\mu_{f}(t)}+\frac{\sigma}{\mu_{f}}(1-e^{-\mu_{f}(t)}).\cr
\end{array}
\end{equation}
Therefore,
\begin{equation}
\label{ineq;Nh:Nf}
\begin{array}{llllllllll}
\limsup_{t\rightarrow\infty}N_h(t)\leq\frac{\Pi}{\mu_{h}}, \cr
\limsup_{t\rightarrow\infty}N_{f}(t)\leq\frac{\sigma}{\mu_{f}}.\cr
\end{array}
\end{equation}
Hence, our proposed model \eqref{EQ1} is considered to be mathematically well posed.

For positivity of model \eqref{EQ2}, we add all the state variables in
model \eqref{EQ2} to get
\begin{equation}
\begin{array}{llllllllll}
&^{C}_{0}D^{\alpha}_{t}N_{h}(t)=\Pi-\mu_{h}N_{h}(t), \cr
&^{C}_{0}D^{\alpha}_{t}N_{f}(t)=\sigma-\mu_{f}N_{f}(t),\cr
\end{array}
\end{equation}
from which we conclude that \eqref{ineq;Nh:Nf} also holds
for \eqref{EQ2}, showing that model \eqref{EQ2} remains
in the feasible region $\mathcal{X}$.


\subsection{Food-borne diseases reproduction number and the disease-free equilibrium}

For the parameters of our model \eqref{EQ1}, let
$k_{1}=\mu_{h}+\eta+\xi$, $k_{2}=\mu_{h}+\delta$, $k_{3}=1-u +\theta+\gamma$,
and $k_{4}=\rho+\mu_{f}$. Equating the right-hand side of the proposed model
\eqref{EQ1} to zero, that is,
\begin{equation}
\left\{
\begin{array}{llllllllll}
\Pi +\xi A-(1-u)\lambda S-\mu_{h}S=0, \cr
(1-u)\lambda S-(\mu_{h}+\eta+\xi)A=0,\cr
\eta A-(\mu_{h}+\delta)I=0,    \cr
\psi(A+I)-((1-u)+\theta+\gamma)D=0,  \cr
\sigma G_{f}-(\rho+\mu_{f})P_{f}-\tau P_{f}W_{p}=0, \cr
\rho P_{f}-\mu_{f}G_{f}=0,\cr
\kappa\tau P_{f}W_{p}-\mu_{f}W_{p}=0, \cr
\end{array}
\right.
\end{equation}
we obtain that the disease-free equilibrium of model \eqref{EQ1} is given by
\begin{equation}
E_{1}^{0}=\left(\frac{\Pi}{\mu_{h}},0,0,0,\frac{\mu_{f}}{\kappa\tau},
\frac{\rho}{\kappa\tau}, \frac{\sigma\rho-(\rho+\mu_{f})\mu_{f}}{ \tau\mu_{f}}\right),
\end{equation}
or
\begin{equation}
E_{2}^{0}=\left(\frac{\Pi}{\mu_{h}},0,0,0,0,0,0\right),
\end{equation}
while the endemic disease equilibrium point is $(S^{*},
A^{*},I^{*},D^{*},P_{f}^{*},G_{f}^{*},W_{p}^{*})$, where
\begin{equation}
S^{*}=\frac{\Pi
k_{1}}{\lambda(k_{1}-\xi)((1-u)+k_{1}\mu_{h}},~~A^{*}=\frac{\lambda((1-u)S
}{k_{1}},~~I^{*}=\frac{\eta  A }{ k_{2}},~~D^{*}=\frac{\psi(k_{2}+\eta) A }{
k_{2}k_{3}},
\end{equation}
\begin{equation}
P_{f}^{*}=\frac{\mu_{f}}{\kappa\tau},~~G_{f}^{*}=\frac{\rho}{\kappa\tau},~~W_{
p}^{*}=\frac{\rho(\sigma-\mu_{f})-\mu_{f}^{2}}{\tau\mu_{f}}.
\end{equation}
Let $\mathcal{H} = (S,E, I,D, P_{f},G_{f}, W_{p})^{T }$. Then one observes that
\begin{equation}
\frac{d\mathcal{H}}{dt}=\mathcal{F}-\mathcal{V},
\end{equation}
where
\begin{equation}
\mathcal{F}=\left[ \begin{matrix}
0 & 0 & 0 & 0 & 0 & 0 & 0\\
\frac{(1-u)\vartheta\rho}{\kappa\tau} & 0 & 0 & 0 & 0 &
\frac{(1-u)\vartheta\Pi}{\mu_{h}}  & 0  \\
0 & 0 & 0 & 0 & 0 & 0 & 0\\
0 & 0 & 0 & 0 & 0 & 0 & 0\\
0 & 0 & 0 & 0 & 0 & 0 & 0\\
0 & 0 & 0 & 0 & 0 & 0 & 0\\
0 & 0 & 0 & 0 & 0 & 0 & 0\\
\end{matrix} \right],
\end{equation}
\begin{equation}
\mathcal{V}=\left[ 
\begin{matrix}
\frac{(1-u)\vartheta\rho}{\kappa\tau} +\mu_{h} & -\xi & 0 & 0 &
0 & \frac{(1-u)\vartheta\Pi}{\mu_{h}}  & 0\\
0 & k_{1} & 0 & 0 & 0 & 0 & 0\\
0 &  -\eta & k_{2}  & 0 & 0 & 0 & 0\\
0 & -\psi  & -\psi & k_{3}& 0 & 0 & 0 \\
0 & 0 & 0 & k_{4}& -\sigma & \frac{ \mu_{f}}{\kappa} & 0 \\
0 & 0 & 0 & -\rho & \mu_{f} & 0& 0 \\
0 & 0 & 0 & 0 &-\kappa\tau \frac{\sigma\rho-(\rho+\mu_{f})\mu_{f}}{
\tau\mu_{f}} & 0 & 0 \\
\end{matrix} \right].
\end{equation}
The food-borne diseases reproduction number is given by
\begin{equation}
\mathcal{R}_{0}= \rho(\mathcal{FV}^{-1})
\end{equation}
with $\rho(\cdot)$ representing the spectral radius,
from which we obtain
\begin{equation}
\mathcal{R}_{0}=\frac{
\xi\rho\vartheta(1-u)}{k_{1}\rho\vartheta(1-u)+\kappa k_{1}\tau \mu_{h}}.
\end{equation}


\subsection{Existence and Uniqueness}
\label{sec:5}

We reformulate the right-hand side of model \eqref{EQ2} as follows:
\begin{equation}
\label{EQ3}
\left\{
\begin{array}{llllllllll}
\mathcal{M}_{1}(t,S,A,I,D,P_{f},G_{f},W_{p})=\Pi +\xi A-(1-u)\lambda
S-\mu_{h}S, \cr
\mathcal{M}_{2}(t,S,A,I,D,P_{f},G_{f},W_{p})=(1-u)\lambda
S-(\mu_{h}+\eta+\xi)A,\cr
\mathcal{M}_{3}(t,S,A,I,D,P_{f},G_{f},W_{p})=\eta A-(\mu_{h}+\delta)I, \cr
\mathcal{M}_{4}(t,S,A,I,D,P_{f},G_{f},W_{p})=\psi(A+I)-((1-u)+\theta+\gamma)D,\cr
\mathcal{M}_{5}(t,S,A,I,D,P_{f},G_{f},W_{p})=\sigma
G_{f}-(\rho+\mu_{f})P_{f}-\tau P_{f}W_{p}, \cr
\mathcal{M}_{6}(t,S,A,I,D,P_{f},G_{f},W_{p})=\rho P_{f}-\mu_{f}G_{f},\cr
\mathcal{M}_{7}(t,S,A,I,D,P_{f},G_{f},W_{p})=\kappa\tau
P_{f}W_{p}-\mu_{f}W_{p}.
\end{array}
\right.
\end{equation}
From \eqref{EQ3}, our proposed model \eqref{EQ2} can be expressed as
\begin{equation}
\label{EQ4}
\left\{
\begin{array}{llllllllll}
&^{C}_{0}D^{\alpha}_{t}\mathcal{M}(t)=\Phi(t, \mathcal{M}(t)),
~t\in[0,\eta],~0<\alpha\leq1, \cr
&\mathcal{M}(0) = \mathcal{M}_{0},
\end{array}
\right.
\end{equation}
where
\begin{equation}
\label{EQ5}
\mathcal{M}(t)=
\left\{
\begin{array}{llllllllll}
S(t),\cr
A(t),\cr
I(t),\cr
D(t),\cr
P_{f}(t),\cr
G_{f}(t),\cr
W_{p}(t),\cr
\end{array}
\mathcal{M}_{0}=
\right.
\left\{
\begin{array}{llllllllll}
S(t),\cr
A(0),\cr
I(0),\cr
D(0),\cr
P_{f}(0),\cr
G_{f}(0),\cr
W_{p}(0),\cr
\end{array}
\right.
\end{equation}	
and
\begin{equation}
\label{EQ6}
\Phi(t, \mathcal{M}(t))=
\left\{
\begin{array}{llllllllll}
\mathcal{M}_{1}(t,S,A,I,D,P_{f},G_{f},W_{p}),\\
\mathcal{M}_{2}(t,S,A,I,D,P_{f},G_{f},W_{p}),\\
\mathcal{M}_{3}(t,S,A,I,D,P_{f},G_{f},W_{p}),\\
\mathcal{M}_{4}(t,S,A,I,D,P_{f},G_{f},W_{p}),\\
\mathcal{M}_{5}(t,S,A,I,D,P_{f},G_{f},W_{p}),\\
\mathcal{M}_{6}(t,S,A,I,D,P_{f},G_{f},W_{p}),\\
\mathcal{M}_{7}(t,S,A,I,D,P_{f},G_{f},W_{p}).
\end{array}
\right.
\end{equation}
Using the ideas in Lemma~\ref{Lemma2.3}, system \eqref{EQ4} yields
\begin{equation}
\mathcal{M}(t)=\mathcal{M}_{0}(t)+\frac{1}{\Gamma(\alpha)}\int^{t}_{0}\Phi(s,
\mathcal{M}(s))(t-s)^{\alpha-1}ds.
\end{equation}
Further, we let $ 0\leq t\leq \mathcal{T}$ with the Banach space
$\mathcal{B}=C([0,\mathcal{T}]\times \mathbb{R}^7_+, \mathbb{R}_+)$ under the
norm
\begin{equation}
||\mathcal{M}||_\mathcal{B}=||(S,A,I,D,P_{f},G_{f},W_{p})||_\mathcal{B}
=\sup\{|\mathcal{D}(t)|: t\in \mathcal{T} \},
\end{equation}
such that 
\begin{equation}
|\mathcal{D}|:= |S|+|A|+|I|+|D|+|P_{f}|+|G_{f}|+|W_{p}|.
\end{equation}
For obtaining existence and uniqueness, we assume some growth conditions on
function vector \begin{equation}m:[0,\mathcal{T}]\times
\mathbb{R}^7_+\longrightarrow\mathbb{R}_+\end{equation} as:
$(\digamma_{1})$ $\exists$ $Q_{m}, C_{m}$ such that
\begin{equation}
\Phi(t, \mathcal{M}(t))\leq Q_{m}|\mathcal{M}|+C_{m}.
\end{equation}
$(\digamma_{2})$ $\exists$ $\textbf{L}_{m}>0$ such that
if $\mathcal{M},\tilde{\mathcal{M}}\in\mathcal{B}$, then
\begin{equation}
|\Phi(t, \mathcal{M}(t))-\Phi(t, \tilde{\mathcal{M}}(t))|
\leq \textbf{L}_{m}[|\mathcal{M}-\tilde{\mathcal{M}}|].
\end{equation}

\begin{theorem}
\label{Theorem1}
Under the continuity of ${m}$ together with
$(\digamma_{1})$, system \eqref{EQ4} has at least one solution.
\end{theorem}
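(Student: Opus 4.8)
The plan is to recast the initial value problem \eqref{EQ4} as a fixed point problem for the Volterra-type integral operator furnished by Lemma~\ref{Lemma2.3}, and then to invoke Schauder's fixed point theorem. Concretely, I would define the operator $\mathcal{A}:\mathcal{B}\to\mathcal{B}$ by
\begin{equation}
(\mathcal{A}\mathcal{M})(t)=\mathcal{M}_{0}(t)+\frac{1}{\Gamma(\alpha)}\int_{0}^{t}\Phi(s,\mathcal{M}(s))(t-s)^{\alpha-1}\,ds,
\end{equation}
so that solutions of \eqref{EQ4} correspond exactly to fixed points of $\mathcal{A}$. Since Schauder's theorem requires a continuous self-map of a closed, bounded, convex set whose image is relatively compact, the proof splits into three tasks: constructing an invariant ball, verifying continuity of $\mathcal{A}$, and establishing compactness of $\mathcal{A}(\mathcal{U})$ through the Arzel\`a--Ascoli theorem.

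First I would exhibit an invariant set. Taking the closed ball $\mathcal{U}=\{\mathcal{M}\in\mathcal{B}:\|\mathcal{M}\|_{\mathcal{B}}\leq R\}$, the growth condition $(\digamma_{1})$ gives $|\Phi(s,\mathcal{M}(s))|\leq Q_{m}R+C_{m}$ on $\mathcal{U}$, and using $\int_{0}^{t}(t-s)^{\alpha-1}\,ds=t^{\alpha}/\alpha\leq\mathcal{T}^{\alpha}/\alpha$ yields
\begin{equation}
\|\mathcal{A}\mathcal{M}\|_{\mathcal{B}}\leq|\mathcal{M}_{0}|+\frac{\mathcal{T}^{\alpha}}{\Gamma(\alpha+1)}\,(Q_{m}R+C_{m}).
\end{equation}
Choosing $R$ large enough, under the condition $\mathcal{T}^{\alpha}Q_{m}/\Gamma(\alpha+1)<1$ so that the right-hand side can be made $\leq R$, shows $\mathcal{A}(\mathcal{U})\subseteq\mathcal{U}$; convexity and closedness of $\mathcal{U}$ are immediate. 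Continuity of $\mathcal{A}$ then follows from the continuity of $m$, and hence of $\Phi$: for a convergent sequence $\mathcal{M}_{n}\to\mathcal{M}$ in $\mathcal{B}$, I would pass to the limit under the integral sign, the integrable singularity $(t-s)^{\alpha-1}$ together with the uniform bound on $\Phi$ justifying the interchange by dominated convergence.

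The compactness step is where the real work lies. Uniform boundedness of $\mathcal{A}(\mathcal{U})$ is already contained in the self-map estimate, so the crux is equicontinuity. For $0\leq t_{1}<t_{2}\leq\mathcal{T}$ I would split
\begin{equation}
|(\mathcal{A}\mathcal{M})(t_{2})-(\mathcal{A}\mathcal{M})(t_{1})|\leq\frac{Q_{m}R+C_{m}}{\Gamma(\alpha)}\left[\int_{0}^{t_{1}}\bigl|(t_{2}-s)^{\alpha-1}-(t_{1}-s)^{\alpha-1}\bigr|\,ds+\int_{t_{1}}^{t_{2}}(t_{2}-s)^{\alpha-1}\,ds\right],
\end{equation}
and then evaluate these elementary integrals to bound the bracket by a constant multiple of $t_{2}^{\alpha}-t_{1}^{\alpha}$, which tends to $0$ as $t_{2}\to t_{1}$ independently of $\mathcal{M}\in\mathcal{U}$. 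The main obstacle is precisely controlling this difference of weakly singular kernels near the diagonal $s=t$, and doing so uniformly in $\mathcal{M}$; it is exactly the growth bound $(\digamma_{1})$ that makes the estimate uniform. With uniform boundedness and equicontinuity in hand, Arzel\`a--Ascoli shows that $\mathcal{A}(\mathcal{U})$ is relatively compact, and Schauder's fixed point theorem then delivers a fixed point of $\mathcal{A}$ in $\mathcal{U}$, that is, at least one solution of \eqref{EQ4}.
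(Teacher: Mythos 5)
Your proposal follows essentially the same route as the paper's own proof: both recast \eqref{EQ4} as a fixed point problem for the Volterra integral operator from Lemma~\ref{Lemma2.3}, use $(\digamma_{1})$ to get an invariant ball and uniform boundedness, establish equicontinuity by splitting the weakly singular kernel at $t_{1}$, and conclude via Arzel\`a--Ascoli and Schauder. If anything, you are more careful than the paper, which asserts the self-mapping property $\textbf{T}_{\Omega}(\mathcal{Z})\subset\mathcal{Z}$ without the smallness condition $\mathcal{T}^{\alpha}Q_{m}/\Gamma(\alpha+1)<1$ that you correctly identify as needed to choose $R$, and which omits the verification of continuity of the operator that you supply via dominated convergence.
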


\begin{proof}
We shall arrive at the required conclusion using the Schauder
fixed point theorem. Let us take a closed subset 
$\mathcal{Z}$ of $\mathcal{B}$ as
$\textbf{T}_{\Omega}=\{\mathcal{M}\in \mathcal{B}: ||\mathcal{M}||\leq
\mathbb{R}, \mathbb{R}>0\}$, where $\textbf{T}_{\Omega}$ is the operator
defined as $\textbf{T}_{\Omega}:\mathcal{Z}\rightarrow\mathcal{Z}$ such that
\begin{equation}
\textbf{T}_{\Omega}(\mathcal{M})=\mathcal{M}_{0}(t)+\frac{1}{\Gamma(\alpha)}
\int^{t}_{0}\Phi(s, \mathcal{M}(s))(t-s)^{\alpha-1}ds,
\end{equation}
which means that
\begin{equation}
\label{EQ9}
\begin{split}
\begin{array}{llllllllll}
|\textbf{T}_{\Omega}(\mathcal{M})(t)|&\leq|\mathcal{M}_{0}|+\frac{1}{\Gamma(\alpha)}
\int^{t}_{0}|\Phi(s, \mathcal{M}(s))|(t-s)^{\alpha-1}ds,\cr
&\leq|\mathcal{M}_{0}|+\frac{1}{\Gamma(\alpha)}\int^{t}_{0}(t-s)^{\alpha-1}[Q_{m
}|\mathcal{M}|+C_{m}]ds,\cr
&\leq|\mathcal{M}_{0}|+\frac{\mathcal{T}^\alpha}{\Gamma(\alpha+1)}[Q_{m}||
\mathcal{M}||+C_{m}].
\end{array}
\end{split}
\end{equation}
From \eqref{EQ9}, it follows that
$|\textbf{T}_{\Omega}(\mathcal{M})|\leq|\mathcal{M}_{0}|
+\frac{\mathcal{T}^\alpha}{\Gamma(\alpha+1)}[Q_{m}||\mathcal{M}||+C_{m}]$ and also
$\textbf{T}_{\Omega}\in\mathcal{Z}$ such that
$\textbf{T}_{\Omega}(\mathcal{Z})\subset\mathcal{Z}.$  Also it reveals that
the operator $\textbf{T}_{\Omega}$ is bounded. For completely continuity we proceed
as follows. Let $t_{2}<t_{1}\in [0,\mathcal{T}]$ such that
\begin{equation}
\label{EQ10}
\begin{split}
\begin{array}{llllllllll}
|\textbf{T}_{\Omega}(\mathcal{M})(t_{2})|-|\textbf{T}_{\Omega}(\mathcal{M})(t_{1
})|&\leq\big|\frac{1}{\Gamma(\alpha)}\int^{t_{2}}_{0}\Phi(s,
\mathcal{M}(s))(t_{2}-s)^{\alpha-1}ds \cr
&-\frac{1}{\Gamma(\alpha)}\int^{t_{1}}_{0}\Phi(s,
\mathcal{M}(s))(t_{1}-s)^{\alpha-1}ds\big|\cr
&\leq\frac{1}{\Gamma(\alpha)}\big[\int^{t_{1}}_{0}[(t_{1}-s)^{\alpha-1}-(t_{2}-s
)^{\alpha-1}]\Phi(s, \mathcal{M}(s))ds \cr
&+ \int^{t_{2}}_{t_{1}}(t_{2}-s)^{\alpha-1}\Phi(s, \mathcal{M}(s))ds\big],\cr
&\leq\frac{Q_{m}\mathcal{R}+C_{m}}{\Gamma(\alpha+1)}[(t_{2}^{\alpha}
-t_{1}^{\alpha})+2(t_{2}-t_{1})^{\alpha}].
\end{array}
\end{split}
\end{equation}
Basically, we can see from \eqref{EQ10} that
$|\textbf{T}_{\Omega}(\mathcal{M})(t_{2})|-|\textbf{T}_{\Omega}(\mathcal{M})(t_{
1})|\rightarrow 0$
as $t_{2}\rightarrow t_{1}$. Hence, $\textbf{T}_{\Omega}$
is an equicontinuous operator.
With the help of the Arzela Ascoli theorem, we know that function
$\textbf{T}_{\Omega}$ is a completely continuous function and uniformly bounded.
Again, by Schauder's fixed point theorem, we conclude that our proposed system
has at least one solution.
\end{proof}

\begin{theorem}
\label{Theorem2}
Suppose that $(\digamma_{2})$ holds. Then the considered
system \eqref{EQ4} has a unique solution if
\begin{equation}
\frac{\mathcal{T}^\alpha}{\Gamma(\alpha+1)}\textbf{L}_{m}<1.
\end{equation}
\end{theorem}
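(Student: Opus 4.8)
The plan is to establish uniqueness via the Banach fixed point theorem (contraction mapping principle), using the same integral operator $\textbf{T}_{\Omega}$ introduced in the proof of Theorem~\ref{Theorem1}. Since the existence of at least one solution is already guaranteed, I only need to show that $\textbf{T}_{\Omega}$ is a contraction on the Banach space $\mathcal{B}$; the unique fixed point then coincides with the unique solution of the equivalent integral equation, and hence of system \eqref{EQ4}.

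First I would take arbitrary $\mathcal{M},\tilde{\mathcal{M}}\in\mathcal{B}$ and estimate the distance between their images under $\textbf{T}_{\Omega}$. Writing
\begin{equation}
\textbf{T}_{\Omega}(\mathcal{M})(t)-\textbf{T}_{\Omega}(\tilde{\mathcal{M}})(t)
=\frac{1}{\Gamma(\alpha)}\int_{0}^{t}(t-s)^{\alpha-1}
\bigl[\Phi(s,\mathcal{M}(s))-\Phi(s,\tilde{\mathcal{M}}(s))\bigr]\,ds,
\end{equation}
the initial-data terms $\mathcal{M}_0$ cancel. Taking absolute values, moving them inside the integral, and invoking the Lipschitz hypothesis $(\digamma_{2})$ gives the bound
\begin{equation}
\bigl|\textbf{T}_{\Omega}(\mathcal{M})(t)-\textbf{T}_{\Omega}(\tilde{\mathcal{M}})(t)\bigr|
\leq\frac{\textbf{L}_{m}}{\Gamma(\alpha)}\int_{0}^{t}(t-s)^{\alpha-1}
\bigl|\mathcal{M}(s)-\tilde{\mathcal{M}}(s)\bigr|\,ds.
\end{equation}

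Next I would pull out the supremum norm $\|\mathcal{M}-\tilde{\mathcal{M}}\|_{\mathcal{B}}$ from the integrand and evaluate the remaining integral $\int_{0}^{t}(t-s)^{\alpha-1}\,ds=t^{\alpha}/\alpha$, which is bounded above by $\mathcal{T}^{\alpha}/\alpha$ on $[0,\mathcal{T}]$. Using $\Gamma(\alpha+1)=\alpha\,\Gamma(\alpha)$ then yields
\begin{equation}
\bigl|\textbf{T}_{\Omega}(\mathcal{M})(t)-\textbf{T}_{\Omega}(\tilde{\mathcal{M}})(t)\bigr|
\leq\frac{\mathcal{T}^{\alpha}}{\Gamma(\alpha+1)}\,\textbf{L}_{m}\,
\|\mathcal{M}-\tilde{\mathcal{M}}\|_{\mathcal{B}}.
\end{equation}
Taking the supremum over $t\in[0,\mathcal{T}]$ on the left converts this into the operator inequality $\|\textbf{T}_{\Omega}(\mathcal{M})-\textbf{T}_{\Omega}(\tilde{\mathcal{M}})\|_{\mathcal{B}}\leq\frac{\mathcal{T}^{\alpha}}{\Gamma(\alpha+1)}\textbf{L}_{m}\,\|\mathcal{M}-\tilde{\mathcal{M}}\|_{\mathcal{B}}$. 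By the standing assumption $\frac{\mathcal{T}^{\alpha}}{\Gamma(\alpha+1)}\textbf{L}_{m}<1$, the operator $\textbf{T}_{\Omega}$ is a contraction, so the Banach fixed point theorem gives a unique fixed point and therefore a unique solution.

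This argument is essentially routine, so there is no deep obstacle; the only points requiring care are the cancellation of the $\mathcal{M}_0$ terms (which justifies why the constant $C_m$ from $(\digamma_1)$ plays no role here) and the correct handling of the weakly singular kernel $(t-s)^{\alpha-1}$, whose integrability for $0<\alpha\leq 1$ is what makes the integral evaluation valid. If one wanted a sharper, assumption-free statement, the mild subtlety would be replacing the crude contraction constant by a weighted (Bielecki-type) norm to obtain uniqueness for all $\mathcal{T}$; but under the hypothesis as stated, the straightforward estimate above suffices.
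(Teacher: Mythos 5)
Your proposal is correct and follows essentially the same route as the paper: both apply the Lipschitz condition $(\digamma_{2})$ to the difference $\textbf{T}_{\Omega}(\mathcal{M})-\textbf{T}_{\Omega}(\tilde{\mathcal{M}})$ (where the $\mathcal{M}_{0}$ terms cancel), arrive at the bound $\frac{\mathcal{T}^{\alpha}}{\Gamma(\alpha+1)}\textbf{L}_{m}\|\mathcal{M}-\tilde{\mathcal{M}}\|$, and invoke the Banach contraction principle. Your write-up merely makes explicit the intermediate steps (the kernel integral $\int_{0}^{t}(t-s)^{\alpha-1}\,ds=t^{\alpha}/\alpha$ and the identity $\Gamma(\alpha+1)=\alpha\Gamma(\alpha)$) that the paper compresses into a single displayed inequality.
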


\begin{proof}
If $\mathcal{M}, \tilde{\mathcal{M}}\in\mathcal{B}$, then
\begin{equation}
\label{EQ11}
\begin{split}
\begin{array}{llllllllll}
||\textbf{T}_{\Omega}(\mathcal{M})-\textbf{T}_{\Omega}(\tilde{\mathcal{M}})||
&\leq \sup_{t\in [0,\mathcal{T}]}\big|\frac{1}{\Gamma(\alpha)}\int^{t}_{0}\Phi(s,
\mathcal{M}(s))(t-s)^{\alpha-1}ds \\
&-\frac{1}{\Gamma(\alpha)}\int^{t}_{0}\Phi(s,
\tilde{\mathcal{M}}(s))(t-s)^{\alpha-1}ds\big|,\cr
&\leq\frac{\mathcal{T}^\alpha}{\Gamma(\alpha+1)}\textbf{L}_{m}||\mathcal{M}
-\tilde{\mathcal{M}}||.
\end{array}
\end{split}
\end{equation}
Hence,
$||\textbf{T}_{\Omega}(\mathcal{M})-\textbf{T}_{\Omega}(\tilde{\mathcal{M}})||
\leq\frac{\mathcal{T}^\alpha}{\Gamma(\alpha+1)}\textbf{L}_{m}||\mathcal{M}
-\tilde{\mathcal{M}}||$,
which completes the proof: from the contraction principle, the operator has a unique
fixed point and, consequently, our proposed model has a unique solution.
\end{proof}


\subsection{Hyers-Ulam Stability (HU)}
\label{sec:6}

The stability of numerical results of our proposed system will be examined in
this section.

\begin{lemma}
\label{Lemma4}
The solution $\mathcal{M}(t)$ of
\begin{equation}
\label{EQ12a}
\begin{split}
\begin{array}{llllllllll}
^{C}_{0}D^{\alpha}_{t}\mathcal{M}(t)=\Phi(t,\mathcal{M}(t)) +\Upsilon(t), \cr
\mathcal{M}(0) = \mathcal{M}_{0},
\end{array}
\end{split}
\end{equation}
satisfies the relation
\begin{equation}
\label{EQ12b}
\left|\mathcal{M}(t)-\left(\mathcal{M}_{0}(t)+\frac{1}{\Gamma(\alpha)}\int^{t}_{0}
\Phi(s, \mathcal{M}(s))(t-s)^{\alpha-1}ds\right)\right|
\leq \frac{\mathcal{T}^\alpha}{\Gamma(\alpha+1)}\epsilon=\Delta\epsilon.
\end{equation}
\end{lemma}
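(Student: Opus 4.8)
The plan is to reduce the perturbed problem to its Volterra integral representation and then isolate the perturbation term as a residual. First I would apply Lemma~\ref{Lemma2.3} to the perturbed equation \eqref{EQ12a}, treating the full right-hand side $\Phi(t,\mathcal{M}(t))+\Upsilon(t)$ as the forcing term. This converts the fractional initial value problem into the integral identity
\begin{equation}
\mathcal{M}(t)=\mathcal{M}_{0}(t)+\frac{1}{\Gamma(\alpha)}\int_{0}^{t}\big[\Phi(s,\mathcal{M}(s))+\Upsilon(s)\big](t-s)^{\alpha-1}\,ds.
\end{equation}

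Next I would exploit linearity of the fractional integral to split this into a part depending on $\Phi$ and a part depending only on $\Upsilon$, and then transpose the $\Phi$-part to the left-hand side. The quantity to be estimated in \eqref{EQ12b} then becomes exactly the residual integral of the perturbation,
\begin{equation}
\mathcal{M}(t)-\left(\mathcal{M}_{0}(t)+\frac{1}{\Gamma(\alpha)}\int_{0}^{t}\Phi(s,\mathcal{M}(s))(t-s)^{\alpha-1}\,ds\right)=\frac{1}{\Gamma(\alpha)}\int_{0}^{t}\Upsilon(s)(t-s)^{\alpha-1}\,ds,
\end{equation}
so that no information about the structure of $\Phi$ is needed beyond this cancellation.

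Finally I would take absolute values, pass the modulus inside the integral, and invoke the standard Hyers-Ulam smallness hypothesis $|\Upsilon(s)|\le\epsilon$ on $[0,\mathcal{T}]$. Combining this with the elementary evaluation $\int_{0}^{t}(t-s)^{\alpha-1}\,ds=t^{\alpha}/\alpha$, the identity $\Gamma(\alpha+1)=\alpha\,\Gamma(\alpha)$, and the monotonicity bound $t^{\alpha}\le\mathcal{T}^{\alpha}$ (valid since $t\le\mathcal{T}$ and $\alpha>0$) yields
\begin{equation}
\left|\frac{1}{\Gamma(\alpha)}\int_{0}^{t}\Upsilon(s)(t-s)^{\alpha-1}\,ds\right|\le\frac{\epsilon}{\Gamma(\alpha)}\cdot\frac{t^{\alpha}}{\alpha}\le\frac{\mathcal{T}^{\alpha}}{\Gamma(\alpha+1)}\epsilon,
\end{equation}
which is precisely \eqref{EQ12b} with $\Delta=\mathcal{T}^{\alpha}/\Gamma(\alpha+1)$.

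There is no real obstacle in this argument; it is a direct linearity-plus-estimation computation. The only point requiring care is that the statement implicitly presupposes the perturbation bound $|\Upsilon(t)|\le\epsilon$, the defining ingredient of the Hyers-Ulam framework, and this hypothesis must be made explicit before the final estimate can be closed. Everything else follows from the equivalence in Lemma~\ref{Lemma2.3} and the monotonicity of $t\mapsto t^{\alpha}$.
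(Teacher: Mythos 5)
Your proof is correct, and it is precisely the standard argument that the paper itself alludes to but never gives: the paper's entire ``proof'' of Lemma~\ref{Lemma4} is the sentence ``The proof is standard and we omit it here,'' and the argument it has in mind is exactly your reduction via Lemma~\ref{Lemma2.3}, cancellation of the $\Phi$-part, and the estimate $\frac{1}{\Gamma(\alpha)}\int_0^t(t-s)^{\alpha-1}\,ds = \frac{t^{\alpha}}{\Gamma(\alpha+1)}\le\frac{\mathcal{T}^{\alpha}}{\Gamma(\alpha+1)}$. Your closing remark is also well taken: the bound $|\Upsilon(t)|\le\epsilon$ on $[0,\mathcal{T}]$ is the defining hypothesis of the Hyers--Ulam framework and must be stated for the estimate to close, yet the paper leaves it entirely implicit (the symbol $\epsilon$ appears in \eqref{EQ12b} with no antecedent), so supplying it explicitly is a genuine improvement over the source.
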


\begin{proof}
The proof is standard and we omit it here.
\end{proof}

\begin{theorem}
\label{Theorem5}
Suppose that assumption $(\digamma_{2})$ together with
\eqref{EQ10} hold. Then the solution of the integral \eqref{EQ6} is Hyers-Ulam (HU) stable
and, consequently, the numerical results of the considered model are HU stable if
\begin{equation}
\Theta=\frac{\mathcal{T}^\alpha}{\Gamma(\alpha+1)}\textbf{L}_{m}<1.
\end{equation}
\end{theorem}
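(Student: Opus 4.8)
The plan is to establish Hyers-Ulam stability by relating any approximate solution of the perturbed problem to the exact solution via the integral reformulation and the Lipschitz condition $(\digamma_{2})$. First I would let $\mathcal{M}(t)$ denote an exact solution of system \eqref{EQ4} and let $\tilde{\mathcal{M}}(t)$ be an approximate solution satisfying the perturbed equation \eqref{EQ12a} with a small forcing term $\Upsilon(t)$ bounded by $\epsilon$. By Lemma~\ref{Lemma4}, the approximate solution $\tilde{\mathcal{M}}$ differs from its own integral representation by at most $\Delta\epsilon=\frac{\mathcal{T}^\alpha}{\Gamma(\alpha+1)}\epsilon$, which provides the crucial starting estimate.

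Next I would insert and subtract the exact integral operator. Writing
\begin{equation}
|\mathcal{M}(t)-\tilde{\mathcal{M}}(t)|
\leq\left|\tilde{\mathcal{M}}(t)-\left(\mathcal{M}_{0}+\frac{1}{\Gamma(\alpha)}\int_0^t\Phi(s,\tilde{\mathcal{M}}(s))(t-s)^{\alpha-1}ds\right)\right|
+\left|\frac{1}{\Gamma(\alpha)}\int_0^t\left[\Phi(s,\tilde{\mathcal{M}}(s))-\Phi(s,\mathcal{M}(s))\right](t-s)^{\alpha-1}ds\right|,
\end{equation}
I would bound the first term by $\Delta\epsilon$ using Lemma~\ref{Lemma4} and bound the second term using the Lipschitz assumption $(\digamma_{2})$ exactly as in the uniqueness proof of Theorem~\ref{Theorem2}. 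Taking the supremum over $t\in[0,\mathcal{T}]$ yields
\begin{equation}
||\mathcal{M}-\tilde{\mathcal{M}}||
\leq\Delta\epsilon+\frac{\mathcal{T}^\alpha}{\Gamma(\alpha+1)}\textbf{L}_{m}\,||\mathcal{M}-\tilde{\mathcal{M}}||
=\Delta\epsilon+\Theta\,||\mathcal{M}-\tilde{\mathcal{M}}||.
\end{equation}

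Finally I would absorb the self-referential term using the hypothesis $\Theta<1$. Rearranging gives $(1-\Theta)||\mathcal{M}-\tilde{\mathcal{M}}||\leq\Delta\epsilon$, so that
\begin{equation}
||\mathcal{M}-\tilde{\mathcal{M}}||\leq\frac{\Delta}{1-\Theta}\,\epsilon=:\mathcal{C}_{\Phi}\,\epsilon,
\end{equation}
which is precisely the Hyers-Ulam stability inequality with stability constant $\mathcal{C}_{\Phi}=\frac{\Delta}{1-\Theta}>0$. Since the forcing bound $\epsilon$ was arbitrary, this establishes HU stability of the integral formulation \eqref{EQ6} and hence of the numerical scheme. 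I expect the main obstacle to be conceptual rather than computational: one must be careful to state precisely what an "approximate solution" is (namely one satisfying \eqref{EQ12a}) and to justify that the reformulation as a fixed-point integral equation is equivalent to the Caputo system, so that the Lipschitz estimate from $(\digamma_{2})$ legitimately applies to the difference of the two integral representations. The algebraic manipulation itself is routine once the condition $\Theta<1$ is invoked to guarantee $1-\Theta>0$.
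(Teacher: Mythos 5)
Your proposal is correct and follows essentially the same argument as the paper's proof: split $|\mathcal{M}(t)-\tilde{\mathcal{M}}(t)|$ by the triangle inequality into the Lemma~\ref{Lemma4} perturbation term bounded by $\Delta\epsilon$ and the Lipschitz term bounded by $\Theta\|\mathcal{M}-\tilde{\mathcal{M}}\|$ via $(\digamma_{2})$, then absorb the latter using $\Theta<1$ to get $\|\mathcal{M}-\tilde{\mathcal{M}}\|\leq\frac{\Delta}{1-\Theta}\epsilon$. The only difference is cosmetic (you swap the roles of $\mathcal{M}$ and $\tilde{\mathcal{M}}$, and your displayed decomposition is actually typeset more carefully than the paper's, which repeats an identical integral where the two arguments should differ).
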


\begin{proof}
Let $\mathcal{M}\in\mathcal{B}$ be any solution and
$\tilde{\mathcal{M}}\in \mathcal{B}$
be the unique solution of (\ref{EQ6}). Then,
\begin{equation}
\label{EQ14}
\begin{split}
\begin{array}{llllllllll}
|\mathcal{M}(t)-\tilde{\mathcal{M}}(t)|
&=\left|\mathcal{M}(t)-\left(\mathcal{M}_{0
}(t)+\frac{1}{\Gamma(\alpha)}\int^{t}_{0}\Phi(s,
\tilde{\mathcal{M}}(s))(t-s)^{\alpha-1}ds\right)\right| \cr
&\leq\left|\mathcal{M}(t)-\left(\mathcal{M}_{0}(t)+\frac{1}{\Gamma(\alpha)}
\int^{t}_{0}\Phi(s, \tilde{\mathcal{M}}(s))(t-s)^{\alpha-1}ds\right)\right| \cr
&+ \left|\frac{1}{\Gamma(\alpha)}\int^{t}_{0}\Phi(s,
\tilde{\mathcal{M}}(s))(t-s)^{\alpha-1}ds-\frac{1}{\Gamma(\alpha)}\int^{t}_{0}
\Phi(s, \tilde{\mathcal{M}}(s))(t-s)^{\alpha-1}ds\right| \cr
&\leq\Delta\epsilon+\Theta|\mathcal{M}-\tilde{\mathcal{M}}|.
\end{array}
\end{split}
\end{equation}
Thus, $$|\mathcal{M}-\tilde{\mathcal{M}}|\leq\frac{\Delta}{1-\Theta}\epsilon.$$
Hence, we conclude that system (\ref{EQ6}) is HU stable.
\end{proof}


\section{Numerical Scheme}
\label{sec:7}

The fractional predictor-corrector approach, which was established and examined
for its convergence and error bounds in \citep{MS50}, is a numerical explicit
technique used to test the performance of the suggested fractional-order model
\eqref{EQ1} under the Caputo differential operator. Let us consider
\begin{equation}
\left\{
\begin{array}{llllllllll}
^{C}_{0}D^{\alpha}_{t}\mathcal{M}(t)=\Phi(t,\mathcal{M}(t)), \cr
\mathcal{M}(0) = \mathcal{M}_{0}.
\end{array}
\right.
\end{equation}
Choose the step length $h=\frac{T}{M}$, where $M$ is a positive integer and $T$ is
the upper limit of the closed interval of integration $[0, T]$.
Using the integral equation equivalent to system \eqref{EQ2},
$\mathcal{M}_{a}(t_{j+1})$, $j=0,1,\ldots,n$, can be calculated by
\begin{equation*}
\mathcal{M}_{a}(t_{j+1})
=\frac{h^{\alpha}}{\Gamma(\alpha+2)}\left[
\sum^{n}_{j=0} d_{j,n+1} \Phi(t_{j},
\mathcal{M}_{a}(t_{j}))+
\Phi(t_{n+1}, \mathcal{M}^{p}_{a}(t_{n+1}))\right]
+ \mathcal{M}_{0},
\end{equation*}
where
\begin{equation*}
d_{j,n+1}=
\begin{cases}
\begin{array}{l}
n^{\alpha+1}-(n-\alpha)(n+1)^{\alpha},~~~ j=0,\\
(n-j+2)^{\alpha+1}+(n-j)^{\alpha+1}-2(n-j+1)^{\alpha+1}, ~~~1\leq j\leq n,\\
1,~~~j=n+1.
 \end{array}
\end{cases}
\end{equation*}
The predictor formula is derived as follows:
\begin{equation*}
\begin{split}
 \mathcal{M}^{p}_{a}(t_{n+1})= \frac{1}{\Gamma(\alpha+1)}\sum^{n}_{j=0}
h^{\alpha} [(n-j+1)^{\alpha}-(n-j)^{\alpha}]\Phi(t_{j},
\mathcal{M}_{a}(t_{j}))+ \mathcal{M}_{0}.
\end{split}
\end{equation*}
Thus the corrector formula for system \eqref{EQ2} is
\begin{equation*}
\begin{split}
& ^{C}_{0}D^{\alpha}_{t}S(t_{j+1})=
\frac{\mathbf{h}^{\alpha}}{\Gamma(\alpha+2)}\left[\sum^{n}_{j=0} d_{j,n+1}
\Phi_{1}(t_{j}, \mathcal{M}_{a}(t_{j}))+
\Phi_{1}(t_{n+1}, \mathcal{M}^{p}_{a}(t_{n+1}))\right]+ S_{0},\\
& ^{C}_{0}D^{\alpha}_{t}A(t_{j+1})=
\frac{\mathbf{h}^{\alpha}}{\Gamma(\alpha+2)}\left[\sum^{n}_{j=0} d_{j,n+1}
\Phi_{2}(t_{j}, \mathcal{M}_{a}(t_{j}))+
\Phi_{2}(t_{n+1}, \mathcal{M}^{p}_{a}(t_{n+1}))\right]+ A_{0},\\
& ^{C}_{0}D^{\alpha}_{t}I(t_{j+1})=
\frac{\mathbf{h}^{\alpha}}{\Gamma(\alpha+2)}\left[\sum^{n}_{j=0} d_{j,n+1}
\Phi_{3}(t_{j}, \mathcal{M}_{a}(t_{j}))+
\Phi_{3}(t_{n+1}, \mathcal{M}^{p}_{a}(t_{n+1}))\right]+ I_{0},\\
& ^{C}_{0}D^{\alpha}_{t}D(t_{j+1})=
\frac{\mathbf{h}^{\alpha}}{\Gamma(\alpha+2)}\left[\sum^{n}_{j=0} d_{j,n+1}
\Phi_{4}(t_{j}, \mathcal{M}_{a}(t_{j}))+
\Phi_{4}(t_{n+1}, \mathcal{M}^{p}_{a}(t_{n+1}))\right]+ D_{0},\\
& ^{C}_{0}D^{\alpha}_{t}P_{f}(t_{j+1})=
\frac{\mathbf{h}^{\alpha}}{\Gamma(\alpha+2)}\left[\sum^{n}_{j=0} d_{j,n+1}
\Phi_{5}(t_{j}, \mathcal{M}_{a}(t_{j}))+
\Phi_{5}(t_{n+1}, \mathcal{M}^{p}_{a}(t_{n+1}))\right]+ P_{f_{0}},\\
& ^{C}_{0}D^{\alpha}_{t}G_{f}(t_{j+1})=
\frac{\mathbf{h}^{\alpha}}{\Gamma(\alpha+2)}\left[\sum^{n}_{j=0} d_{j,n+1}
\Phi_{6}(t_{j}, \mathcal{M}_{a}(t_{j}))+
\Phi_{6}(t_{n+1}, \mathcal{M}^{p}_{a}(t_{n+1}))\right]+ G_{f_{0}},\\
& ^{C}_{0}D^{\alpha}_{t}W_{p}(t_{j+1})=
\frac{\mathbf{h}^{\alpha}}{\Gamma(\alpha+2)}\left[\sum^{n}_{j=0} d_{j,n+1}
\Phi_{7}(t_{j}, \mathcal{M}_{a}(t_{j}))+
\Phi_{7}(t_{n+1}, \mathcal{M}^{p}_{a}(t_{n+1}))\right]+ W_{p_{0}}.
\end{split}
\end{equation*}

The results of this iterative scheme are given in Section~\ref{sec:8}.


\section{Results}
\label{sec:8}

For the purpose of validating our created iterative scheme, we now present
some numerical simulations. For this, we start by assuming initial values for
each compartment of our proposed model \eqref{EQ1}, thus: $S= 500000$; $A= 300000$;
$I= 3500$; $D= 2000$; $P_{f}= 250000$; $G_{f}= 200000$; $W_{p}= 2000$. 
Figure~\ref{Fig3**} shows the Latin hypercube sampling of the parameters 
in the control reproduction number. It is noticed that the following parameters 
(temporal resistance rate, the rate at which pupae of flies move to adult flies, 
and infectivity rate of adult flies) contribute positively to the spread 
of food-borne diseases and increase in these parameters (local government interventions, 
the rate at which the asymptomatic become infected, proportionality constant, 
reduction coefficient of pupae of flies and parasitic wasps due to the interaction, 
and natural death rate for human ) reduces the spread of food-borne diseases.  We have employed
the Predictor-Corrector scheme of Section~\ref{sec:7} to obtain a
numerical solution to the system. We
compare the effects of various fractional order values with of a step size 0.2
throughout the time range [0,300] against the suitable parameter values listed
in Table~\ref{T3}. We deduced from all of the graphical simulations that the Caputo
fractional derivative efficiently describes the intricate dynamics of the
presented model of food-borne diseases.


\subsection{Sensitivity analysis}

Now, we utilize sensitivity analysis to determine the relative significance 
of each model parameter in the reproduction number of food-borne diseases, 
denoted as $\mathcal{R}_{0}$, by referring to the data provided in Table~\ref{T3}. 
The current objective is to ascertain the influence of alterations in the model 
parameters on the reproduction number of food-borne diseases. 
Similar instances may be found in the work of 
\citep{asamoah2021sensitivity,asamoah2021non,asamoah2023fractionals}. 
We generate a three-dimensional graph representing the natural mortality rate 
for humans and flies, the constant of proportionality, the rate of temporal resistance, 
the reduction coefficient of $P_f(t)$ and $W_p(t)$ resulting from their interaction, 
the rate at which $P_f(t)$ transitions to $G_f(t)$, and the intervention by local government, 
presented in Fig.~\ref{A}. In addition, the Latin Hypercube Sampling (LHS) technique was utilized 
to generate 2000 samples from a uniform distribution. These samples were used to determine 
the global sensitivity of the different generic factors in the food illnesses reproduction number. 
The resulting sensitivity analysis is depicted in Fig.~\ref{Fig3**}. 
\begin{figure}[!ht]
\centering
\begin{subfigure}{.4\textwidth}
\centering
\includegraphics[scale=0.5]{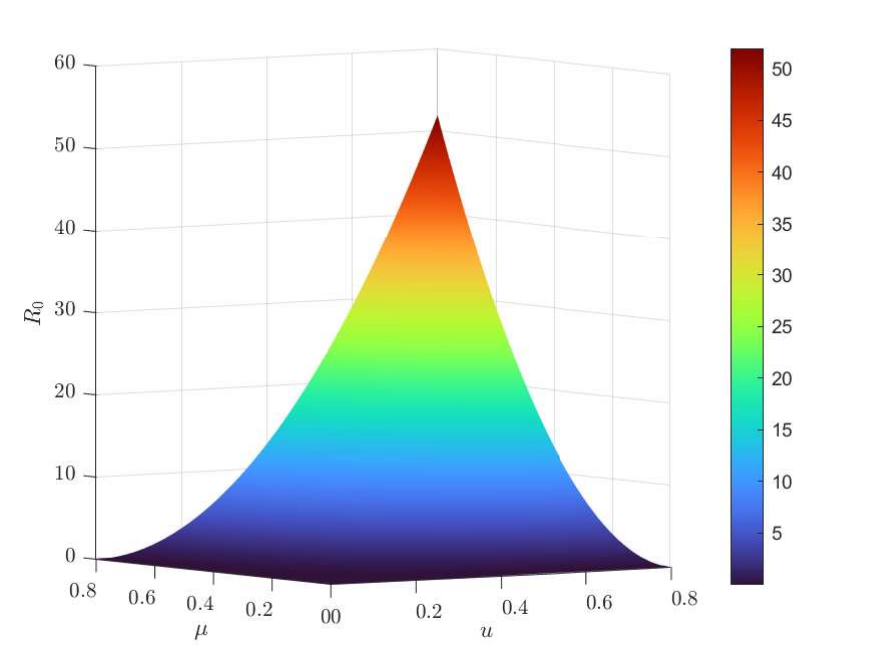}
\caption{$\mathcal{R}_{0}$ against $\mu$ and $u$}\label{F1a}
\end{subfigure}
\begin{subfigure}{.4\textwidth}
\centering
\includegraphics[scale=0.5]{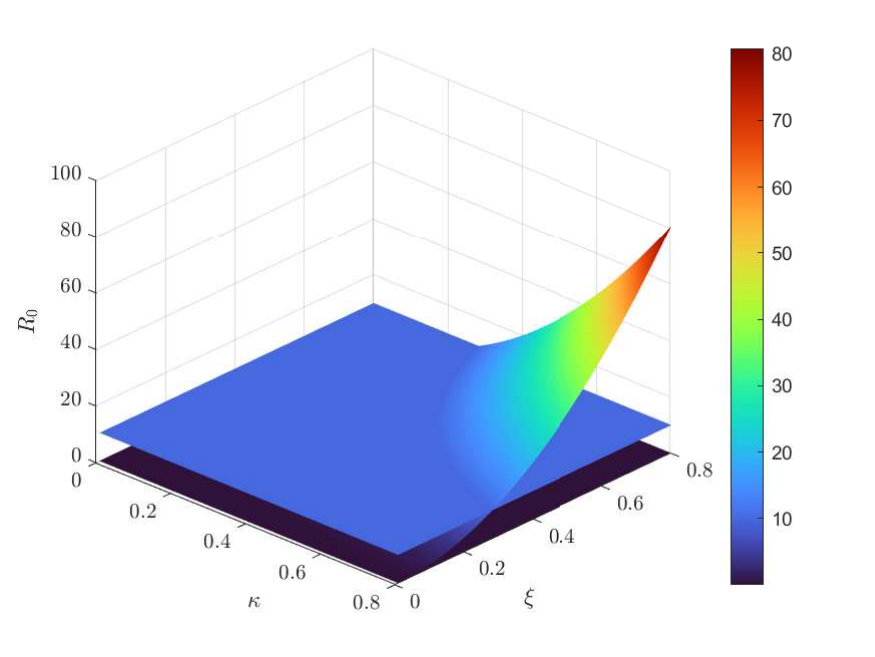}
\caption{$\mathcal{R}_{0}$ against $\kappa$ and $\xi$}\label{F2a}
\end{subfigure}
\begin{subfigure}{.4\textwidth}
\centering
\includegraphics[scale=0.5]{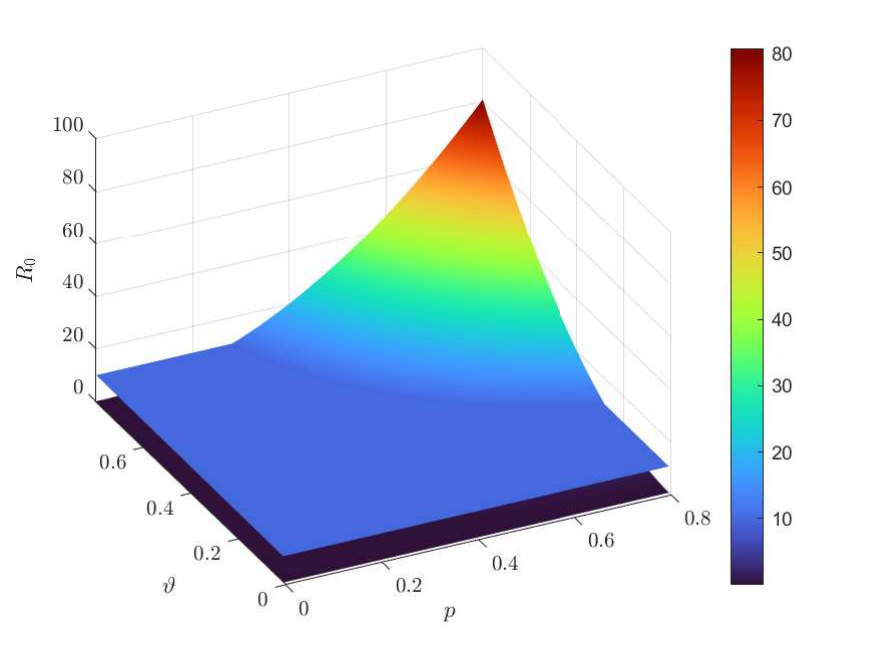}
\caption{$\mathcal{R}_{0}$ against $\vartheta$ and $p$}\label{F3a}
\end{subfigure}
\begin{subfigure}{.4\textwidth}
\centering
\includegraphics[scale=0.5]{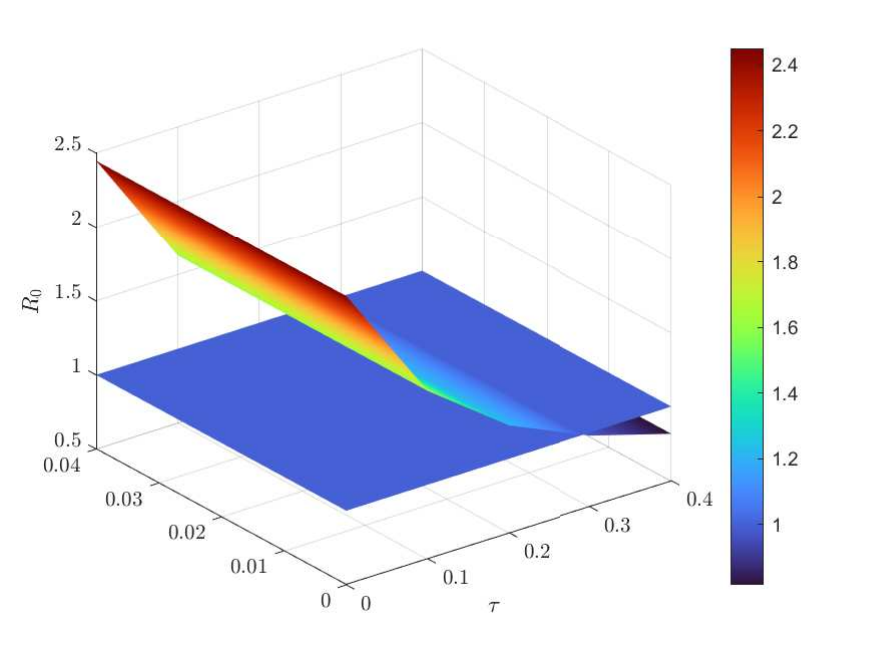}
\caption{$\mathcal{R}_{0}$ against $\tau$}\label{F4a}
\end{subfigure}
\caption{Dynamics of $\mathcal{R}_{0}$ on various parameters.}\label{A}
\end{figure}
\begin{figure}[!ht]
\centering{\includegraphics[width=0.5\textwidth]{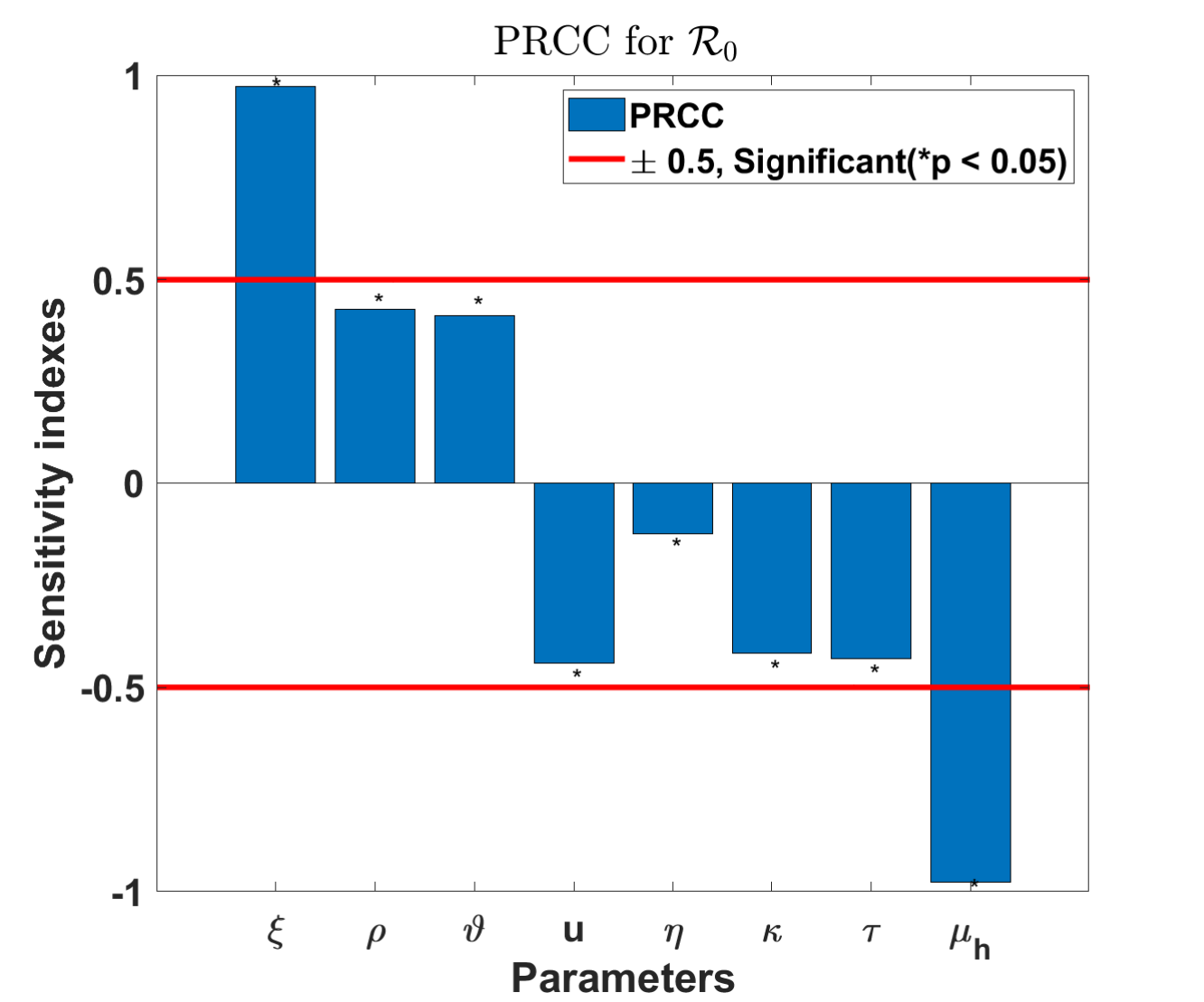}} \\
\caption{Sensitivity plot}\label{Fig3**}
\end{figure}

\begin{figure}[!ht]
\centering
\begin{subfigure}{.4\textwidth}
\centering
\includegraphics[scale=0.5]{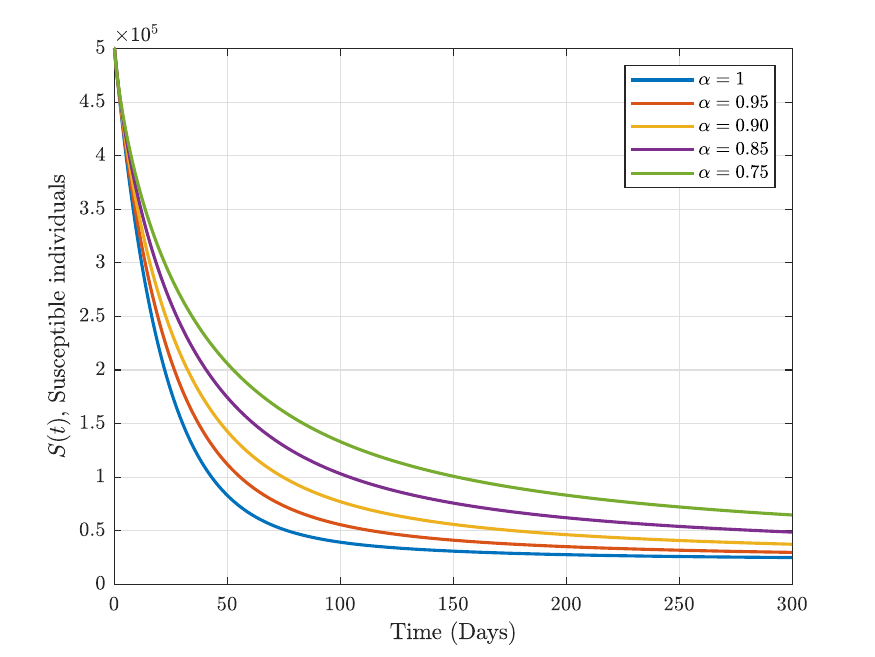}
\caption{}\label{F1}
\end{subfigure}
\begin{subfigure}{.4\textwidth}
\centering
\includegraphics[scale=0.5]{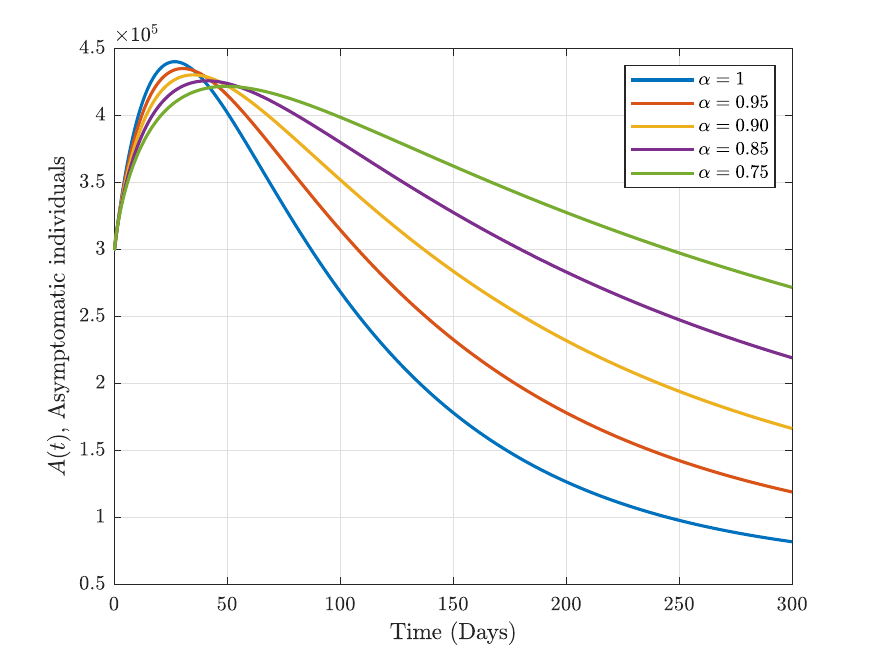}
\caption{}\label{F2b}
\end{subfigure}
\begin{subfigure}{.4\textwidth}
\centering
\includegraphics[scale=0.5]{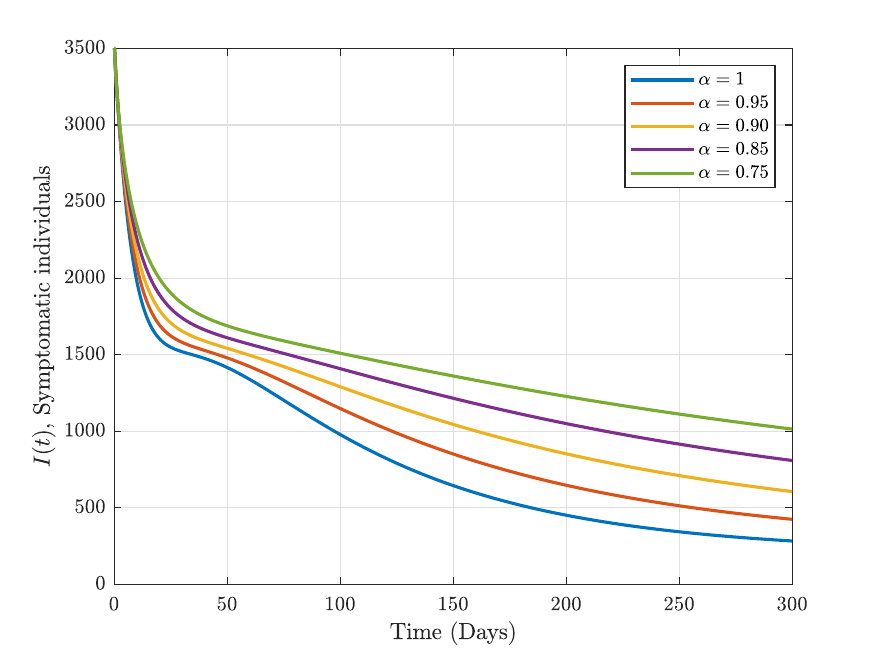}
\caption{}\label{F3b}
\end{subfigure}
\begin{subfigure}{.4\textwidth}
\centering
\includegraphics[scale=0.5]{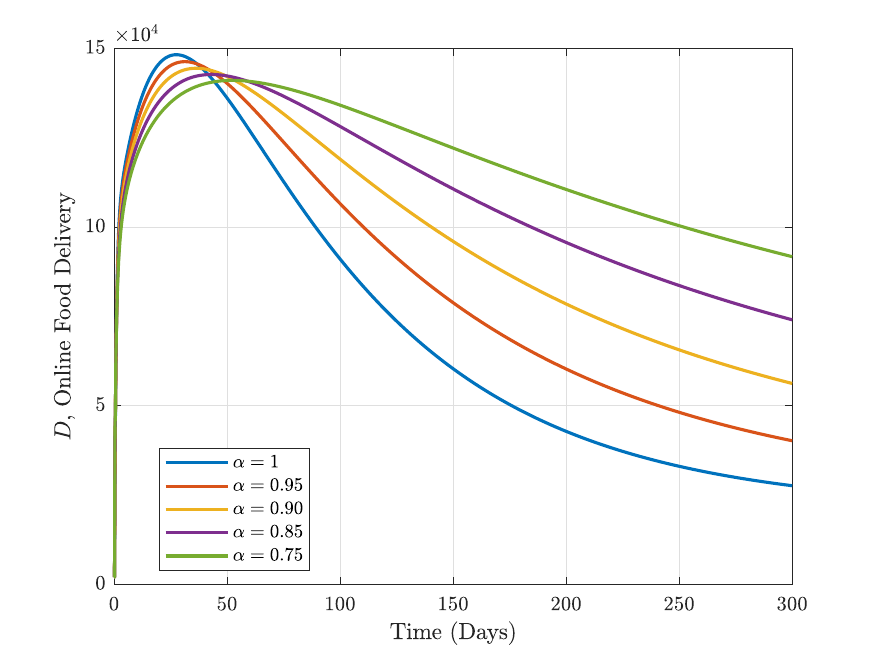}
\caption{}\label{F4}
\end{subfigure}
\caption{Numerical trajectory of food-borne diseases transmission under
Caputo fractional operator.}\label{AA}
\end{figure}
\begin{figure}[!ht]
\centering
\begin{subfigure}{.4\textwidth}
\centering
\includegraphics[scale=0.5]{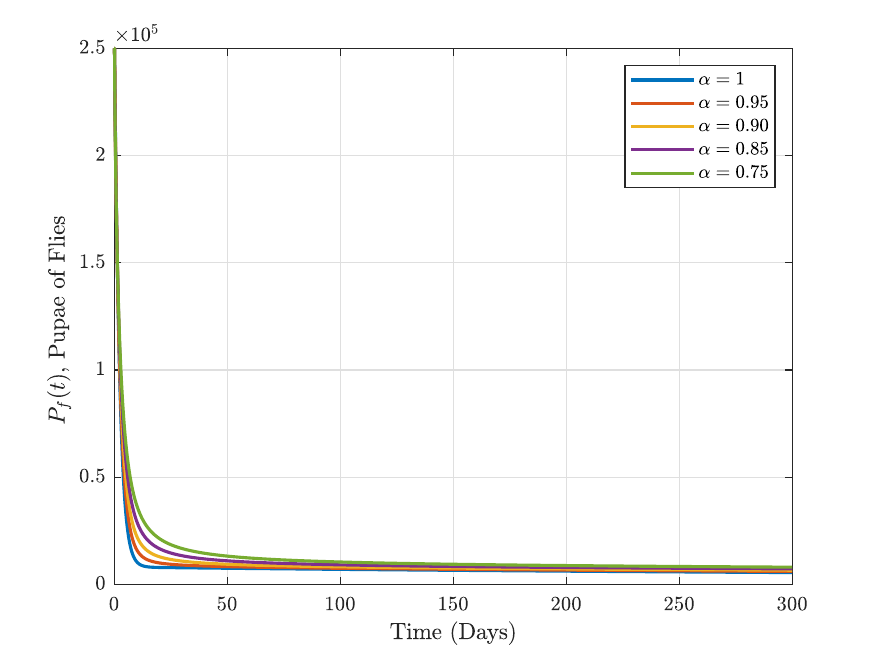}
\caption{}\label{F5}
\end{subfigure}
\begin{subfigure}{.4\textwidth}
\centering
\includegraphics[scale=0.5]{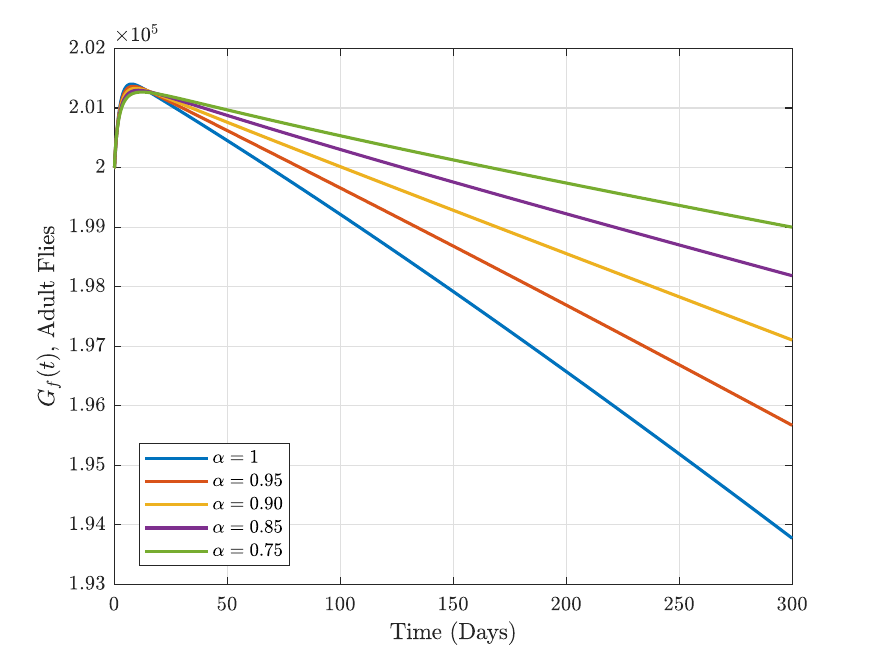}
\caption{}\label{F6}
\end{subfigure}
\begin{subfigure}{.4\textwidth}
\centering
\includegraphics[scale=0.55]{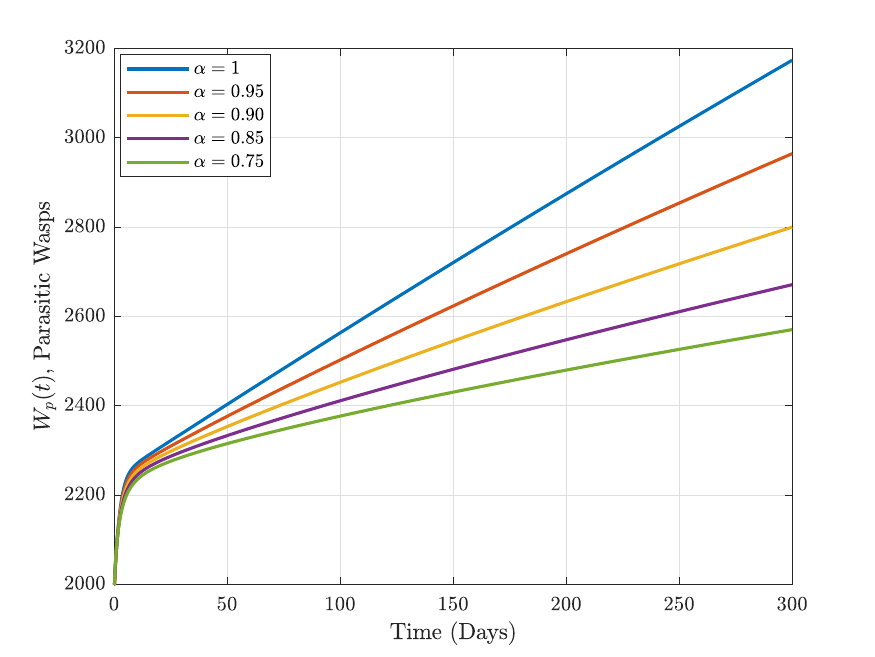}
\caption{}\label{F7}
\end{subfigure}
\caption{Numerical trajectory of food-borne diseases transmission under
the Caputo fractional operator.}\label{B}
\end{figure}

\begin{figure}[!ht]
\centering
\begin{subfigure}{.4\textwidth}
\centering
\includegraphics[scale=0.5]{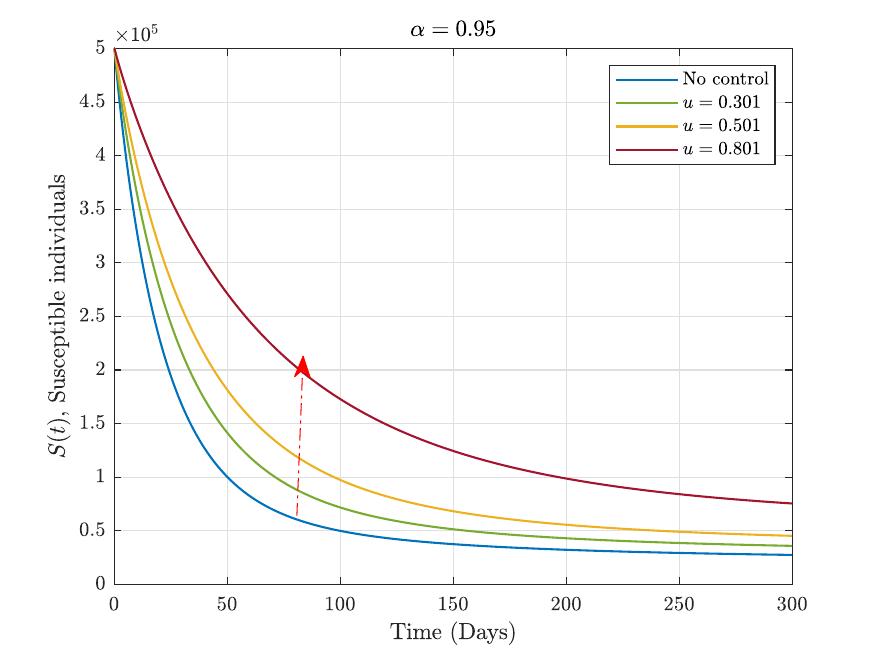}
\caption{}\label{F8}
\end{subfigure}
\begin{subfigure}{.4\textwidth}
\centering
\includegraphics[scale=0.5]{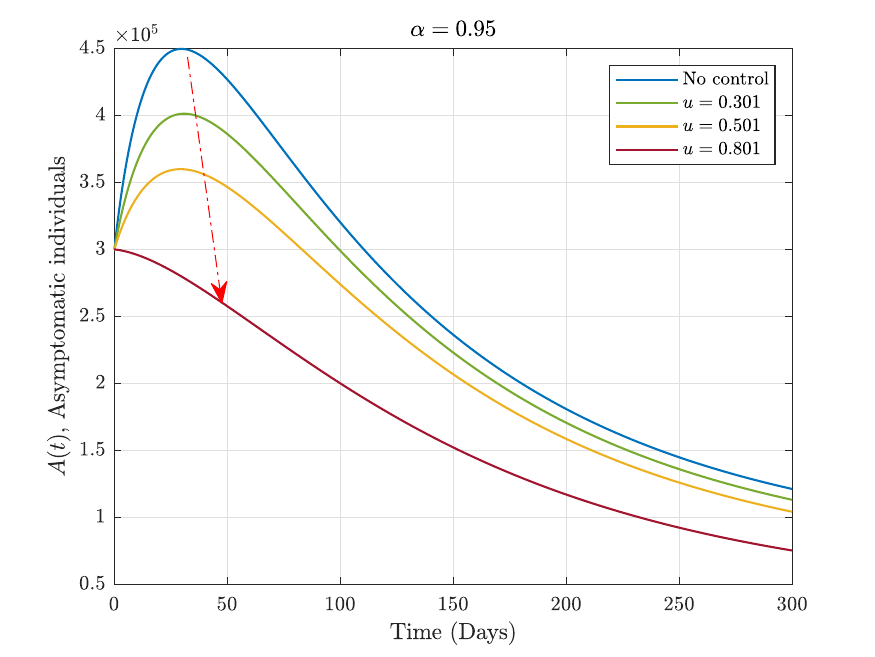}
\caption{}\label{F9}
\end{subfigure}
\begin{subfigure}{.4\textwidth}
\centering
\includegraphics[scale=0.5]{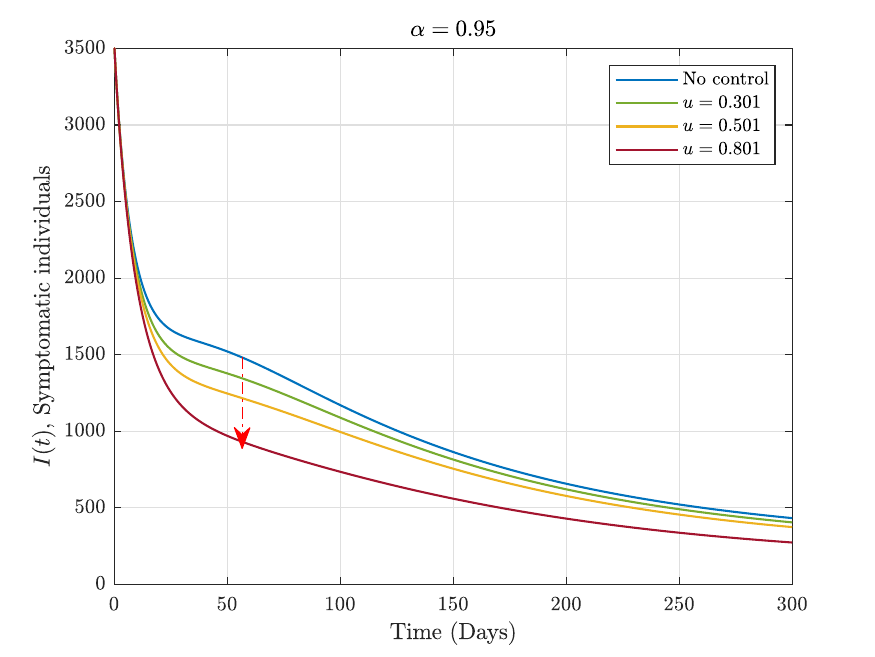}
\caption{}\label{F10}
\end{subfigure}
\begin{subfigure}{.4\textwidth}
\centering
\includegraphics[scale=0.5]{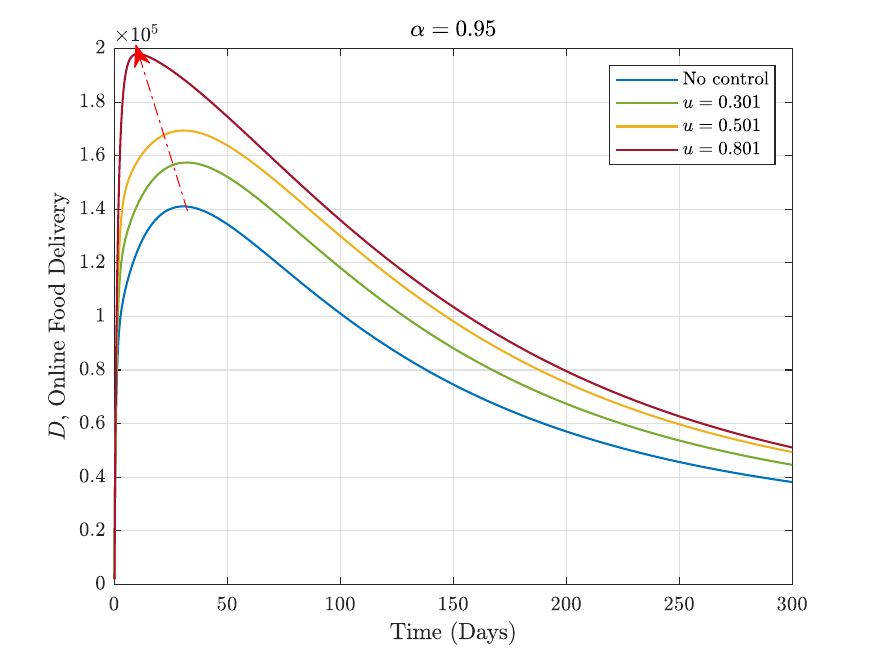}
\caption{}\label{F11}
\end{subfigure}
\caption{Numerical comparison of government control when $\alpha=0.95$.}\label{C}
\end{figure}
\begin{figure}[!ht]
\centering
\includegraphics[scale=0.5]{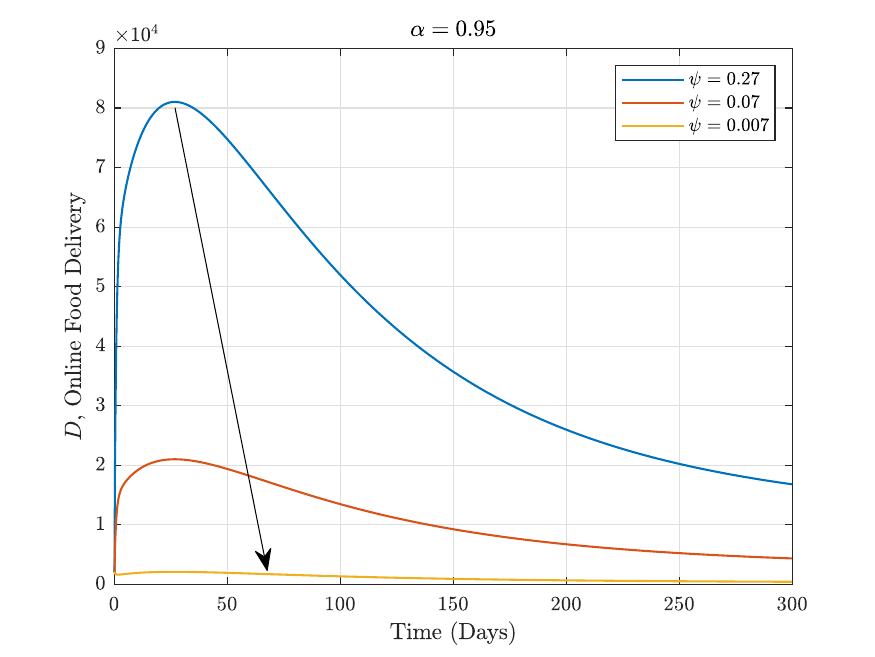}
\includegraphics[scale=0.5]{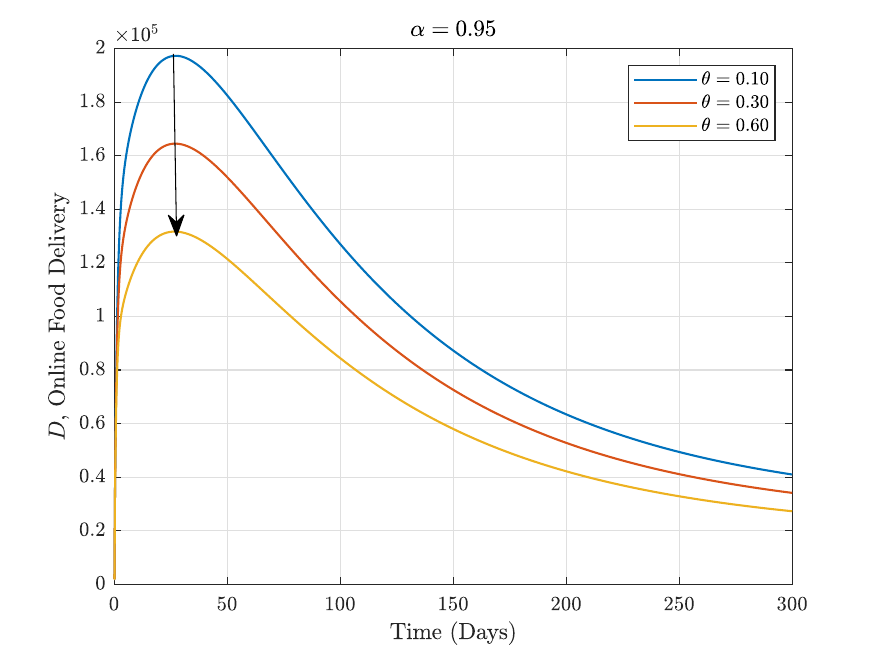}
\caption{Numerical comparison of $\psi$ and $\theta$ when $\alpha=0.95$.}
\label{D}
\end{figure}


\section{Discussion}
\label{sec:9} 

The fractional differential model provides
the best fits to study such epidemic scenarios because they give us more options
to replicate the structure, both theoretically and practically. Fig.~\ref{AA} and
Fig.~\ref{B} represent the numerical simulation results for the human and flies populations.
It is clear that the outcomes and the changes of the fractional-order $\alpha$
fit well, which indicate that the method is effective, thus when the operator
varies by changing $\alpha$, the dynamism of each state variable have the same trend.
From Fig.~\ref{F1}--\ref{F4}, we see that when the fractional order $\alpha$ is reduced
to 1, all the human compartments increase significantly. The biological or
real-life meaning of this dynamics is that urbanization,
busy schedules, and a lack of time to prepare meals, as well as the convenience
of the process, time-saving benefits, and accessibility to information on all
food types, menus, and prices, all contribute to the rising popularity of
online food ordering and delivery in numerous nations. The profit or benefit of
food delivery comes however with a high risk of food safety challenge to the already
existing food safety challenges of the ready-to-eat food, which leads to high
susceptibility and infectivity of food-borne diseases. In Fig.~\ref{F5}--\ref{F7},
we see that the dynamics of flies development goes through
four stages in the aquatic environment, including egg, larvae, pupae, and adult
fly population. Any control intervention, at any stage, results in the management
of adult flies in the area under consideration. The flies mature in this filthy
habitat and become free to wander around the neighbouring housing communities,
where they spread the bacteria that cause various food-borne diseases.
In Fig.~\ref{F8}--\ref{F11}, we show the numerical sensitivity of food-borne diseases
transmission for humans under the Caputo fractional operator with order
$\alpha=0.95$, when the rate of government intervention coverage is varied. We
observe that as the rate of government intervention coverage increases, there
is a decline in the number of asymptomatic and symptomatic infected humans, and
a little increase in the number of susceptible humans and online food delivery.
This result gives chance to the government to implement any food safety control
intervention to the maximum since it will not affect the order-delivery food
but reduce infection. In Fig.~\ref{D}, we show the numerical sensitivity of
$\psi$ and $\theta$ on online food delivery under the Caputo fractional
operator with order $\alpha=0.95.$ We note that as the rate of $\psi$ and
$\theta$  increase, there is a high decrease in the online food delivery
compartment of the model. From a biological point of view, these observations
suggest that the memories of environmental hygiene and disease inflow may
reduce food-borne diseases transmission.


\section{Conclusion}
\label{sec:10}

We have proposed and comprehensively analyzed a novel deterministic mathematical 
model for the transmission of food-borne diseases in a population 
consisting of humans and flies, considering the Caputo fractional
order derivative and employing a Predictor-Corrector scheme. The qualitative
aspects of the model, including positivity, boundedness, equilibrium points,
and the basic reproductive number, have been thoroughly investigated.
The analysis of system existence and uniqueness has been carried out using
the Banach and Schauder fixed point theorems. Additionally, the stability
of the fractional controlling system of equations has been examined using
Hyers-Ulam-type stability criteria. To assess the efficacy of the proposed
fractional-order model, numerical trajectories have been generated.
We have also examined the impact of critical parameters. Based on these trajectories,
we have hypothesized that the memory index or fractional order could be utilized
by public health policymakers to comprehend and predict the dynamics of food-borne
disease transmission. Furthermore, we have observed that maximum implementation
of food safety control interventions by the government may not have an effect
on food delivery services. Additionally, we have found that improvements in environmental
hygiene and the reduction of disease inflow can potentially decrease
the incidence of food-borne diseases.


\section*{Acknowledgements}

This work was partially supported by the Funda\c{c}\~{a}o
para a Ci\^{e}ncia e a Tecnologia, I.P. (FCT, Funder ID = 50110000187)
under grants UIDB/04106/2020 and UIDP/04106/2020 (CIDMA);
and project 2022.03091.PTDC (CoSysM3).

\section*{Author contributions}

The authors confirm sole responsibility for study conceptualization, 
methodology, formal analysis, codes writing, manuscript writing 
and editing, and approval of the final manuscript.

\section*{Declarations}

\subsection*{Funding}

This work was partially supported by the Funda\c{c}\~{a}o
para a Ci\^{e}ncia e a Tecnologia, I.P. (FCT, Funder ID = 50110000187)
under grants UIDB/04106/2020 and UIDP/04106/2020 (CIDMA);
and project 2022.03091.PTDC (CoSysM3).

\subsection*{Competing interests}

The authors have no competing interests to declare 
that are relevant to the content of this article.

\subsection*{Availability of data and materials}

The authors confirm that the data supporting the findings 
of this study are available within the article.

\subsection*{Code availability}

Matlab code is available from the corresponding 
author on reasonable request.




\begin{thebibliography}{49}
	
\expandafter\ifx\csname natexlab\endcsname\relax\def\natexlab#1{#1}\fi
\providecommand{\url}[1]{\texttt{#1}}
\providecommand{\href}[2]{#2}
\providecommand{\path}[1]{#1}
\providecommand{\DOIprefix}{doi:}
\providecommand{\ArXivprefix}{arXiv:}
\providecommand{\URLprefix}{URL: }
\providecommand{\Pubmedprefix}{pmid:}
\providecommand{\doi}[1]{\href{http://dx.doi.org/#1}{\path{#1}}}
\providecommand{\Pubmed}[1]{\href{pmid:#1}{\path{#1}}}
\providecommand{\bibinfo}[2]{#2}
\ifx\xfnm\relax \def\xfnm[#1]{\unskip,\space#1}\fi

\bibitem[{Abidemi(2023)}]{abidemi2023optimal}
\bibinfo{author}{Abidemi, A.} (\bibinfo{year}{2023}).
\newblock \bibinfo{title}{Optimal cost-effective control of drug abuse by
students: insight from mathematical modeling}.
\newblock {\it \bibinfo{journal}{Modeling Earth Systems and Environment}\/},
{\it \bibinfo{volume}{9}\/}, \bibinfo{pages}{811--829}.

	\bibitem[{Addai et~al.(2023)Addai, Zhang, Asamoah \& Essel}]{MS38}
	\bibinfo{author}{Addai, E.}, \bibinfo{author}{Zhang, L.},
	\bibinfo{author}{Asamoah, J.~K.}, \& \bibinfo{author}{Essel, J.~F.}
	(\bibinfo{year}{2023}).
	\newblock \bibinfo{title}{A fractional order age-specific smoke epidemic
		model}.
	\newblock {\it \bibinfo{journal}{Applied Mathematical Modelling}\/},  {\it
		\bibinfo{volume}{119}\/}, \bibinfo{pages}{99--118}.

	\bibitem[{Ahmad et~al.(2024)Ahmad, Andrawus, Ado, Maigoro, Yusuf, Althobaiti \&
		Mustapha}]{ahmad2024mathematical}
	\bibinfo{author}{Ahmad, Y.~U.}, \bibinfo{author}{Andrawus, J.},
	\bibinfo{author}{Ado, A.}, \bibinfo{author}{Maigoro, Y.~A.},
	\bibinfo{author}{Yusuf, A.}, \bibinfo{author}{Althobaiti, S.}, \&
	\bibinfo{author}{Mustapha, U.~T.} (\bibinfo{year}{2024}).
	\newblock \bibinfo{title}{Mathematical modeling and analysis of human-to-human
		monkeypox virus transmission with post-exposure vaccination}.
	\newblock {\it \bibinfo{journal}{Modeling Earth Systems and Environment}\/},
	(pp. \bibinfo{pages}{1--21}).

	\bibitem[{Ahmed et~al.(2020)Ahmed, Baba, Yusuf, Kumam \& Kumam}]{MS44}
	\bibinfo{author}{Ahmed, I.}, \bibinfo{author}{Baba, I.~A.},
	\bibinfo{author}{Yusuf, A.}, \bibinfo{author}{Kumam, P.}, \&
	\bibinfo{author}{Kumam, W.} (\bibinfo{year}{2020}).
	\newblock \bibinfo{title}{Analysis of caputo fractional-order model for
		covid-19 with lockdown}.
	\newblock {\it \bibinfo{journal}{Advances in difference equations}\/},  {\it
		\bibinfo{volume}{2020}\/}, \bibinfo{pages}{394}.

	\bibitem[{Ain et~al.(2022)Ain, Anjum, Din, Zeb, Djilali \& Khan}]{MS40}
	\bibinfo{author}{Ain, Q.~T.}, \bibinfo{author}{Anjum, N.},
	\bibinfo{author}{Din, A.}, \bibinfo{author}{Zeb, A.},
	\bibinfo{author}{Djilali, S.}, \& \bibinfo{author}{Khan, Z.~A.}
	(\bibinfo{year}{2022}).
	\newblock \bibinfo{title}{On the analysis of caputo fractional order dynamics
		of middle east lungs coronavirus (mers-cov) model}.
	\newblock {\it \bibinfo{journal}{Alexandria Engineering Journal}\/},  {\it
		\bibinfo{volume}{61}\/}, \bibinfo{pages}{5123--5131}.

	\bibitem[{Alade et~al.(2023)Alade, Alnegga, Olaniyi \&
		Abidemi}]{alade2023mathematical}
	\bibinfo{author}{Alade, T.~O.}, \bibinfo{author}{Alnegga, M.},
	\bibinfo{author}{Olaniyi, S.}, \& \bibinfo{author}{Abidemi, A.}
	(\bibinfo{year}{2023}).
	\newblock \bibinfo{title}{Mathematical modelling of within-host chikungunya
		virus dynamics with adaptive immune response}.
	\newblock {\it \bibinfo{journal}{Modeling Earth Systems and Environment}\/},
	{\it \bibinfo{volume}{9}\/}, \bibinfo{pages}{3837--3849}.
	
	\bibitem[{Aldila et~al.(2013)Aldila, G{\"o}tz \& Soewono}]{MS11}
	\bibinfo{author}{Aldila, D.}, \bibinfo{author}{G{\"o}tz, T.}, \&
	\bibinfo{author}{Soewono, E.} (\bibinfo{year}{2013}).
	\newblock \bibinfo{title}{An optimal control problem arising from a dengue
		disease transmission model}.
	\newblock {\it \bibinfo{journal}{Mathematical biosciences}\/},  {\it
		\bibinfo{volume}{242}\/}, \bibinfo{pages}{9--16}.
	
	\bibitem[{Alla~Hamou et~al.(2024)Alla~Hamou, Azroul, Bouda \&
		Guedda}]{alla2024mathematical}
	\bibinfo{author}{Alla~Hamou, A.}, \bibinfo{author}{Azroul, E.},
	\bibinfo{author}{Bouda, S.}, \& \bibinfo{author}{Guedda, M.}
	(\bibinfo{year}{2024}).
	\newblock \bibinfo{title}{Mathematical modeling of hiv transmission in a
		heterosexual population: incorporating memory conservation}.
	\newblock {\it \bibinfo{journal}{Modeling Earth Systems and Environment}\/},
	{\it \bibinfo{volume}{10}\/}, \bibinfo{pages}{393--416}.
	
	\bibitem[{Anastassopoulou et~al.(2020)Anastassopoulou, Russo, Tsakris \&
		Siettos}]{MS12}
	\bibinfo{author}{Anastassopoulou, C.}, \bibinfo{author}{Russo, L.},
	\bibinfo{author}{Tsakris, A.}, \& \bibinfo{author}{Siettos, C.}
	(\bibinfo{year}{2020}).
	\newblock \bibinfo{title}{Data-based analysis, modelling and forecasting of the
		covid-19 outbreak}.
	\newblock {\it \bibinfo{journal}{PloS one}\/},  {\it \bibinfo{volume}{15}\/},
	\bibinfo{pages}{e0230405}.
	
	\bibitem[{Anderson \& May(1991)}]{MS13}
	\bibinfo{author}{Anderson, R.~M.}, \& \bibinfo{author}{May, R.~M.}
	(\bibinfo{year}{1991}).
	\newblock {\it \bibinfo{title}{Infectious diseases of humans: dynamics and
			control}\/}.
	\newblock \bibinfo{publisher}{Oxford university press}.
	
	\bibitem[{Asamoah et~al.(2021{\natexlab{a}})Asamoah, Jin \&
		Sun}]{asamoah2021non}
	\bibinfo{author}{Asamoah, J. K.~K.}, \bibinfo{author}{Jin, Z.}, \&
	\bibinfo{author}{Sun, G.-Q.} (\bibinfo{year}{2021}{\natexlab{a}}).
	\newblock \bibinfo{title}{Non-seasonal and seasonal relapse model for q fever
		disease with comprehensive cost-effectiveness analysis}.
	\newblock {\it \bibinfo{journal}{Results in Physics}\/},  {\it
		\bibinfo{volume}{22}\/}, \bibinfo{pages}{103889}.
	
	\bibitem[{Asamoah et~al.(2021{\natexlab{b}})Asamoah, Jin, Sun, Seidu, Yankson,
		Abidemi, Oduro, Moore \& Okyere}]{asamoah2021sensitivity}
	\bibinfo{author}{Asamoah, J. K.~K.}, \bibinfo{author}{Jin, Z.},
	\bibinfo{author}{Sun, G.-Q.}, \bibinfo{author}{Seidu, B.},
	\bibinfo{author}{Yankson, E.}, \bibinfo{author}{Abidemi, A.},
	\bibinfo{author}{Oduro, F.}, \bibinfo{author}{Moore, S.~E.}, \&
	\bibinfo{author}{Okyere, E.} (\bibinfo{year}{2021}{\natexlab{b}}).
	\newblock \bibinfo{title}{Sensitivity assessment and optimal economic
		evaluation of a new covid-19 compartmental epidemic model with control
		interventions}.
	\newblock {\it \bibinfo{journal}{Chaos, Solitons \& Fractals}\/},  {\it
		\bibinfo{volume}{146}\/}, \bibinfo{pages}{110885}.
	
	\bibitem[{Asamoah \& Sun(2023)}]{asamoah2023fractionals}
	\bibinfo{author}{Asamoah, J. K.~K.}, \& \bibinfo{author}{Sun, G.-Q.}
	(\bibinfo{year}{2023}).
	\newblock \bibinfo{title}{Fractional caputo and sensitivity heat map for a
		gonorrhea transmission model in a sex structured population}.
	\newblock {\it \bibinfo{journal}{Chaos, Solitons \& Fractals}\/},  {\it
		\bibinfo{volume}{175}\/}, \bibinfo{pages}{114026}.
	
	\bibitem[{Aslam et~al.(2021)Aslam, Murtaza, Abdeljawad, Rahman, Khan, Khan \&
		Gulzar}]{MS36}
	\bibinfo{author}{Aslam, M.}, \bibinfo{author}{Murtaza, R.},
	\bibinfo{author}{Abdeljawad, T.}, \bibinfo{author}{Rahman, G.~u.},
	\bibinfo{author}{Khan, A.}, \bibinfo{author}{Khan, H.}, \&
	\bibinfo{author}{Gulzar, H.} (\bibinfo{year}{2021}).
	\newblock \bibinfo{title}{A fractional order hiv/aids epidemic model with
		mittag-leffler kernel}.
	\newblock {\it \bibinfo{journal}{Advances in Difference Equations}\/},  {\it
		\bibinfo{volume}{2021}\/}, \bibinfo{pages}{1--15}.
	
	\bibitem[{Ball et~al.(2008)Ball, Knock \& O'Neill}]{MS15}
	\bibinfo{author}{Ball, F.~G.}, \bibinfo{author}{Knock, E.~S.}, \&
	\bibinfo{author}{O'Neill, P.~D.} (\bibinfo{year}{2008}).
	\newblock \bibinfo{title}{Control of emerging infectious diseases using
		responsive imperfect vaccination and isolation}.
	\newblock {\it \bibinfo{journal}{Math. Biosci.}\/},  {\it
		\bibinfo{volume}{216}\/}, \bibinfo{pages}{100--113}. \URLprefix
	\url{https://doi.org/10.1016/j.mbs.2008.08.008}.
	\DOIprefix\doi{10.1016/j.mbs.2008.08.008}.
	
	\bibitem[{Barro et~al.(2018)Barro, Guiro \& Ouedraogo}]{MS14}
	\bibinfo{author}{Barro, M.}, \bibinfo{author}{Guiro, A.}, \&
	\bibinfo{author}{Ouedraogo, D.} (\bibinfo{year}{2018}).
	\newblock \bibinfo{title}{Optimal control of a sir epidemic model with general
		incidence function and a time delays}.
	\newblock {\it \bibinfo{journal}{Cubo (Temuco)}\/},  {\it
		\bibinfo{volume}{20}\/}, \bibinfo{pages}{53--66}.
	
	\bibitem[{Becker(1979)}]{MS16}
	\bibinfo{author}{Becker, N.} (\bibinfo{year}{1979}).
	\newblock \bibinfo{title}{The uses of epidemic models}.
	\newblock {\it \bibinfo{journal}{Biometrics}\/},  (pp.
	\bibinfo{pages}{295--305}).
	
	\bibitem[{Brauer \& Castillo-Chavez(2012)}]{MS17}
	\bibinfo{author}{Brauer, F.}, \& \bibinfo{author}{Castillo-Chavez, C.}
	(\bibinfo{year}{2012}).
	\newblock {\it \bibinfo{title}{Mathematical models in population biology and
			epidemiology}\/} volume~\bibinfo{volume}{40} of {\it \bibinfo{series}{Texts
			in Applied Mathematics}\/}.
	\newblock (\bibinfo{edition}{2nd} ed.).
	\newblock \bibinfo{publisher}{Springer, New York}.
	\newblock \URLprefix \url{https://doi.org/10.1007/978-1-4614-1686-9}.
	\DOIprefix\doi{10.1007/978-1-4614-1686-9}.
	
	\bibitem[{Caputo \& Fabrizio(2015)}]{MS20}
	\bibinfo{author}{Caputo, M.}, \& \bibinfo{author}{Fabrizio, M.}
	(\bibinfo{year}{2015}).
	\newblock \bibinfo{title}{A new definition of fractional derivative without
		singular kernel}.
	\newblock {\it \bibinfo{journal}{Progress in Fractional Differentiation \&
			Applications}\/},  {\it \bibinfo{volume}{1}\/}, \bibinfo{pages}{73--85}.
	
	\bibitem[{Casella(2020)}]{MS18}
	\bibinfo{author}{Casella, F.} (\bibinfo{year}{2020}).
	\newblock \bibinfo{title}{Can the covid-19 epidemic be controlled on the basis
		of daily test reports?}
	\newblock {\it \bibinfo{journal}{IEEE Control Systems Letters}\/},  {\it
		\bibinfo{volume}{5}\/}, \bibinfo{pages}{1079--1084}.
	
	\bibitem[{Castilho(2006)}]{MS19}
	\bibinfo{author}{Castilho, C.} (\bibinfo{year}{2006}).
	\newblock \bibinfo{title}{Optimal control of an epidemic through educational
		campaigns.}
	\newblock {\it \bibinfo{journal}{Electronic Journal of Differential Equations
			(EJDE)[electronic only]}\/},  {\it \bibinfo{volume}{2006}\/},
	\bibinfo{pages}{Paper--No}.
	
	\bibitem[{Diethelm(2013)}]{MS9}
	\bibinfo{author}{Diethelm, K.} (\bibinfo{year}{2013}).
	\newblock \bibinfo{title}{A fractional calculus based model for the simulation
		of an outbreak of dengue fever}.
	\newblock {\it \bibinfo{journal}{Nonlinear Dynamics}\/},  {\it
		\bibinfo{volume}{71}\/}, \bibinfo{pages}{613--619}.
	
	\bibitem[{Diethelm et~al.(2004)Diethelm, Ford \& Freed}]{MS50}
	\bibinfo{author}{Diethelm, K.}, \bibinfo{author}{Ford, N.~J.}, \&
	\bibinfo{author}{Freed, A.~D.} (\bibinfo{year}{2004}).
	\newblock \bibinfo{title}{Detailed error analysis for a fractional adams
		method}.
	\newblock {\it \bibinfo{journal}{Numerical algorithms}\/},  {\it
		\bibinfo{volume}{36}\/}, \bibinfo{pages}{31--52}.
	
	\bibitem[{Djordjevic et~al.(2018)Djordjevic, Silva \& Torres}]{MS5}
	\bibinfo{author}{Djordjevic, J.}, \bibinfo{author}{Silva, C.~J.}, \&
	\bibinfo{author}{Torres, D.~F.~M.} (\bibinfo{year}{2018}).
	\newblock \bibinfo{title}{A stochastic SICA epidemic model for HIV
		transmission}.
	\newblock {\it \bibinfo{journal}{Applied Mathematics Letters}\/},  {\it
		\bibinfo{volume}{84}\/}, \bibinfo{pages}{168--175}.
	
	\bibitem[{Giles-Corti et~al.(2016)Giles-Corti, Vernez-Moudon \& Reis}]{MS1}
	\bibinfo{author}{Giles-Corti, B.}, \bibinfo{author}{Vernez-Moudon, A.}, \&
	\bibinfo{author}{Reis, R.} (\bibinfo{year}{2016}).
	\newblock \bibinfo{title}{City planning and population health: a global
		challenge.}
	\newblock {\it \bibinfo{journal}{Lancet.}\/},  {\it \bibinfo{volume}{388}\/},
	\bibinfo{pages}{2912--2924}. \URLprefix
	\url{http://dx.doi.org/10.1016/S0140-6736(16)30066-6}.
	
	\bibitem[{Gonz{\'a}lez-Parra et~al.(2014)Gonz{\'a}lez-Parra, Arenas \&
		Chen-Charpentier}]{MS10}
	\bibinfo{author}{Gonz{\'a}lez-Parra, G.}, \bibinfo{author}{Arenas, A.~J.}, \&
	\bibinfo{author}{Chen-Charpentier, B.~M.} (\bibinfo{year}{2014}).
	\newblock \bibinfo{title}{A fractional order epidemic model for the simulation
		of outbreaks of influenza a (h1n1)}.
	\newblock {\it \bibinfo{journal}{Mathematical methods in the Applied
			Sciences}\/},  {\it \bibinfo{volume}{37}\/}, \bibinfo{pages}{2218--2226}.
	
	\bibitem[{Khan et~al.(2019)Khan, Hammouch \& Baleanu}]{MS42}
	\bibinfo{author}{Khan, M.~A.}, \bibinfo{author}{Hammouch, Z.}, \&
	\bibinfo{author}{Baleanu, D.} (\bibinfo{year}{2019}).
	\newblock \bibinfo{title}{Modeling the dynamics of hepatitis e via the
		caputo--fabrizio derivative}.
	\newblock {\it \bibinfo{journal}{Mathematical Modelling of Natural
			Phenomena}\/},  {\it \bibinfo{volume}{14}\/}, \bibinfo{pages}{311}.
	
	\bibitem[{Kumar et~al.(2024)Kumar, Chauhan, Momani \& Hadid}]{MS41}
	\bibinfo{author}{Kumar, S.}, \bibinfo{author}{Chauhan, R.},
	\bibinfo{author}{Momani, S.}, \& \bibinfo{author}{Hadid, S.}
	(\bibinfo{year}{2024}).
	\newblock \bibinfo{title}{Numerical investigations on covid-19 model through
		singular and non-singular fractional operators}.
	\newblock {\it \bibinfo{journal}{Numerical Methods for Partial Differential
			Equations}\/},  {\it \bibinfo{volume}{40}\/}, \bibinfo{pages}{e22707}.
	
	\bibitem[{Mugabi et~al.(2024)Mugabi, Duffy \& van
		Langevelde}]{mugabi2024behaviours}
	\bibinfo{author}{Mugabi, F.}, \bibinfo{author}{Duffy, K.~J.}, \&
	\bibinfo{author}{van Langevelde, F.} (\bibinfo{year}{2024}).
	\newblock \bibinfo{title}{Behaviours of honeybees can reduce the probability of
		deformed wing virus outbreaks in varroa destructor-infested colonies}.
	\newblock {\it \bibinfo{journal}{Modeling Earth Systems and Environment}\/},
	(pp. \bibinfo{pages}{1--17}).
	
	\bibitem[{Mustapha et~al.(2020)Mustapha, Qureshi, Yusuf \& Hincal}]{MS43}
	\bibinfo{author}{Mustapha, U.~T.}, \bibinfo{author}{Qureshi, S.},
	\bibinfo{author}{Yusuf, A.}, \& \bibinfo{author}{Hincal, E.}
	(\bibinfo{year}{2020}).
	\newblock \bibinfo{title}{Fractional modeling for the spread of hookworm
		infection under caputo operator}.
	\newblock {\it \bibinfo{journal}{Chaos, Solitons \& Fractals}\/},  {\it
		\bibinfo{volume}{137}\/}, \bibinfo{pages}{109878}.
	
	\bibitem[{Nda{\"\i}rou et~al.(2018)Nda{\"\i}rou, Area, Nieto, Silva \& Torres}]{MS6}
	\bibinfo{author}{Nda{\"\i}rou, F.}, \bibinfo{author}{Area, I.},
	\bibinfo{author}{Nieto, J.~J.}, \bibinfo{author}{Silva, C.~J.}, \&
	\bibinfo{author}{Torres, D.~F.~M.} (\bibinfo{year}{2018}).
	\newblock \bibinfo{title}{Mathematical modeling of Zika disease in pregnant
		women and newborns with microcephaly in Brazil}.
	\newblock {\it \bibinfo{journal}{Mathematical Methods in the Applied
			Sciences}\/},  {\it \bibinfo{volume}{41}\/}, \bibinfo{pages}{8929--8941}.
	
	\bibitem[{Nda{\"\i}rou et~al.(2020)Nda{\"\i}rou, Area, Nieto \& Torres}]{MS8}
	\bibinfo{author}{Nda{\"\i}rou, F.}, \bibinfo{author}{Area, I.},
	\bibinfo{author}{Nieto, J.~J.}, \& \bibinfo{author}{Torres, D.~F.~M.}
	(\bibinfo{year}{2020}).
	\newblock \bibinfo{title}{Mathematical modeling of Covid-19 transmission
		dynamics with a case study of Wuhan}.
	\newblock {\it \bibinfo{journal}{Chaos, Solitons \& Fractals}\/},  {\it
		\bibinfo{volume}{135}\/}, \bibinfo{pages}{109846}.
	
	\bibitem[{Noor et~al.(2023)Noor, Yousefi, Spann \& Agarwal}]{NY2}
	\bibinfo{author}{Noor, N.~B.}, \bibinfo{author}{Yousefi, N.},
	\bibinfo{author}{Spann, B.}, \& \bibinfo{author}{Agarwal, N.}
	(\bibinfo{year}{2023}).
	\newblock \bibinfo{title}{Comparing toxicity across social media platforms for
		covid-19 discourse}.
	\newblock In {\it \bibinfo{booktitle}{IARIA Conference}\/} (pp.
	\bibinfo{pages}{21--26}).
	\newblock \DOIprefix\doi{10.48550/arXiv.2302.14270}.
	
	\bibitem[{Peter et~al.(2022{\natexlab{a}})Peter, Kumar, Kumari, Oguntolu,
		Oshinubi \& Musa}]{peter2022transmission}
	\bibinfo{author}{Peter, O.~J.}, \bibinfo{author}{Kumar, S.},
	\bibinfo{author}{Kumari, N.}, \bibinfo{author}{Oguntolu, F.~A.},
	\bibinfo{author}{Oshinubi, K.}, \& \bibinfo{author}{Musa, R.}
	(\bibinfo{year}{2022}{\natexlab{a}}).
	\newblock \bibinfo{title}{Transmission dynamics of monkeypox virus: a
		mathematical modelling approach}.
	\newblock {\it \bibinfo{journal}{Modeling Earth Systems and Environment}\/},
	(pp. \bibinfo{pages}{1--12}).
	
	\bibitem[{Peter et~al.(2022{\natexlab{b}})Peter, Oguntolu, Ojo,
		Olayinka~Oyeniyi, Jan \& Khan}]{MS39}
	\bibinfo{author}{Peter, O.~J.}, \bibinfo{author}{Oguntolu, F.~A.},
	\bibinfo{author}{Ojo, M.~M.}, \bibinfo{author}{Olayinka~Oyeniyi, A.},
	\bibinfo{author}{Jan, R.}, \& \bibinfo{author}{Khan, I.}
	(\bibinfo{year}{2022}{\natexlab{b}}).
	\newblock \bibinfo{title}{Fractional order mathematical model of monkeypox
		transmission dynamics}.
	\newblock {\it \bibinfo{journal}{Physica Scripta}\/},  {\it
		\bibinfo{volume}{97}\/}, \bibinfo{pages}{084005}.
	
	\bibitem[{Podlubny(1998)}]{MS46}
	\bibinfo{author}{Podlubny, I.} (\bibinfo{year}{1998}).
	\newblock {\it \bibinfo{title}{Fractional differential equations: an
			introduction to fractional derivatives, fractional differential equations, to
			methods of their solution and some of their applications}\/}.
	\newblock \bibinfo{publisher}{elsevier}.
	
	\bibitem[{Rachah \& Torres(2016)}]{MS7}
	\bibinfo{author}{Rachah, A.}, \& \bibinfo{author}{Torres, D.~F.~M.}
	(\bibinfo{year}{2016}).
	\newblock \bibinfo{title}{Dynamics and optimal control of Ebola transmission}.
	\newblock {\it \bibinfo{journal}{Mathematics in Computer Science}\/},  {\it
		\bibinfo{volume}{10}\/}, \bibinfo{pages}{331--342}.
	
	\bibitem[{Sahu \& Jena(2023)}]{sahu2023sdiqr}
	\bibinfo{author}{Sahu, I.}, \& \bibinfo{author}{Jena, S.~R.}
	(\bibinfo{year}{2023}).
	\newblock \bibinfo{title}{Sdiqr mathematical modelling for covid-19 of odisha
		associated with influx of migrants based on laplace adomian decomposition
		technique}.
	\newblock {\it \bibinfo{journal}{Modeling Earth Systems and Environment}\/},
	{\it \bibinfo{volume}{9}\/}, \bibinfo{pages}{4031--4040}.
	
	\bibitem[{Shaik et~al.(2023)Shaik, Yousefi, Agarwal \& Spann}]{NY3}
	\bibinfo{author}{Shaik, M.}, \bibinfo{author}{Yousefi, N.},
	\bibinfo{author}{Agarwal, N.}, \& \bibinfo{author}{Spann, B.}
	(\bibinfo{year}{2023}).
	\newblock \bibinfo{title}{Evaluating role of instagram's multimedia in
		connective action leveraging diffusion of innovation and cognitive
		mobilization theories: {B}razilian and {P}eruvian social unrest case
		studies}.
	\newblock In {\it \bibinfo{booktitle}{2023 10th International Conference on
			Behavioural and Social Computing ({BESC})}\/} (pp. \bibinfo{pages}{1--6}).
	\newblock \bibinfo{address}{Larnaca, Cyprus}.
	\newblock \DOIprefix\doi{10.1109/BESC59560.2023.10386436}.
	
	\bibitem[{Sharma et~al.(2023)Sharma, Kumar \& Singh}]{MS45}
	\bibinfo{author}{Sharma, A.~K.}, \bibinfo{author}{Kumar, V.}, \&
	\bibinfo{author}{Singh, R.~N.} (\bibinfo{year}{2023}).
	\newblock \bibinfo{title}{A fractional treatment to food-borne disease modeling
		by q- homotopy analysis transform method (q-hatm)}.
	\newblock {\it \bibinfo{journal}{IJARSCT}\/},  {\it \bibinfo{volume}{3}\/}.
	
	\bibitem[{Sher et~al.(2020)Sher, Shah, Khan, Khan \& Khan}]{MS48}
	\bibinfo{author}{Sher, M.}, \bibinfo{author}{Shah, K.}, \bibinfo{author}{Khan,
		Z.~A.}, \bibinfo{author}{Khan, H.}, \& \bibinfo{author}{Khan, A.}
	(\bibinfo{year}{2020}).
	\newblock \bibinfo{title}{Computational and theoretical modeling of the
		transmission dynamics of novel covid-19 under mittag-leffler power law}.
	\newblock {\it \bibinfo{journal}{Alexandria Engineering Journal}\/},  {\it
		\bibinfo{volume}{59}\/}, \bibinfo{pages}{3133--3147}.
	
	\bibitem[{Statista(2021)}]{MS3}
	\bibinfo{author}{Statista} (\bibinfo{year}{2021}).
	\newblock \bibinfo{title}{Online food delivery}.
	\newblock
	\bibinfo{howpublished}{\url{https://www.statista.com/outlook/dmo/eservices/online-food-delivery/australia}}.
	
	\bibitem[{Strategy(2021)}]{MS4}
	\bibinfo{author}{Strategy, A.} (\bibinfo{year}{2021}).
	\newblock \bibinfo{title}{Making delivery work for everyone}.
	\newblock
	\bibinfo{howpublished}{\url{https://www.readkong.com/page/making-delivery-work-for-everyone-march-2021-accenture-8593927}}.
	
	\bibitem[{Thabet et~al.(2020)Thabet, Abdo, Shah \& Abdeljawad}]{MS47}
	\bibinfo{author}{Thabet, S.~T.}, \bibinfo{author}{Abdo, M.~S.},
	\bibinfo{author}{Shah, K.}, \& \bibinfo{author}{Abdeljawad, T.}
	(\bibinfo{year}{2020}).
	\newblock \bibinfo{title}{Study of transmission dynamics of covid-19
		mathematical model under abc fractional order derivative}.
	\newblock {\it \bibinfo{journal}{Results in Physics}\/},  {\it
		\bibinfo{volume}{19}\/}, \bibinfo{pages}{103507}.
	
	\bibitem[{Tilahun et~al.(2021)Tilahun, Woldegerima \& Mohammed}]{MS49}
	\bibinfo{author}{Tilahun, G.~T.}, \bibinfo{author}{Woldegerima, W.~A.}, \&
	\bibinfo{author}{Mohammed, N.} (\bibinfo{year}{2021}).
	\newblock \bibinfo{title}{A fractional order model for the transmission
		dynamics of hepatitis b virus with two-age structure in the presence of
		vaccination}.
	\newblock {\it \bibinfo{journal}{Arab Journal of Basic and Applied
			Sciences}\/},  {\it \bibinfo{volume}{28}\/}, \bibinfo{pages}{87--106}.
	
	\bibitem[{UNTW(2022)}]{MS2}
	\bibinfo{author}{UNTW} (\bibinfo{year}{2022}).
	\newblock \bibinfo{title}{The 2030 agenda for sustainable development: United
		nations}.
	\newblock \bibinfo{howpublished}{\url{https://sdgs.un.org/ 2030agenda}}.
	
	\bibitem[{Yousefi(2021)}]{NY1}
	\bibinfo{author}{Yousefi, N.} (\bibinfo{year}{2021}).
	\newblock \bibinfo{title}{Exploring machine learning methods for predicting
		disease progression in colon cancer patients}.
	\newblock \URLprefix \url{https://etd.adm.unipi.it/t/etd-06102021-163040/}.
	
	\bibitem[{Yousefi et~al.(2023)Yousefi, Noor, Spann \& Agarwal}]{NY5}
	\bibinfo{author}{Yousefi, N.}, \bibinfo{author}{Noor, N.~B.},
	\bibinfo{author}{Spann, B.}, \& \bibinfo{author}{Agarwal, N.}
	(\bibinfo{year}{2023}).
	\newblock \bibinfo{title}{Towards developing a measure to assess contagiousness
		of toxic tweets}.
	\newblock In {\it \bibinfo{booktitle}{TrueHealth 2023, Workshop on Combating
			Health Misinformation for Social Wellbeing}\/}.
	\newblock \bibinfo{note}{In press}.
	
	\bibitem[{Zhang et~al.(2022)Zhang, Addai, Ackora-Prah, Arthur, Asamoah
		et~al.}]{MS37}
	\bibinfo{author}{Zhang, L.}, \bibinfo{author}{Addai, E.},
	\bibinfo{author}{Ackora-Prah, J.}, \bibinfo{author}{Arthur, Y.~D.},
	\bibinfo{author}{Asamoah, J. K.~K.} et~al. (\bibinfo{year}{2022}).
	\newblock \bibinfo{title}{Fractional-order ebola-malaria coinfection model with
		a focus on detection and treatment rate}.
	\newblock {\it \bibinfo{journal}{Computational and Mathematical Methods in
			Medicine}\/},  {\it \bibinfo{volume}{2022}\/}.
	
\end{thebibliography}
\end{document}